\numberwithin{equation}{section}
\newtheorem{Th}{Theorem}[section]
\newtheorem{Lem}[Th]{Lemma}
\newtheorem{Prop}[Th]{Proposition}
\newtheorem{Cor}[Th]{Corollary}
\theoremstyle{definition}
\newtheorem{Def}[Th]{Definition}
\theoremstyle{remark}
\newtheorem{Rem}[Th]{Remark}
\newcommand{\R}{\mathbb{R}}
\newcommand{\cF}{{\mathcal F}}
\newcommand{\cJ}{{\mathcal J}}
\newcommand{\cN}{{\mathcal N}}
\numberwithin{equation}{section}
\DeclareMathOperator*{\supp}{supp}
\DeclareSymbolFont{rsfs}{U}{rsfs}{m}{n}
\DeclareSymbolFontAlphabet{\mathscr}{rsfs}
\begin{document}


\title{Non-local to local transition for ground states of fractional Schr\"{o}dinger equations on bounded domains}

\author{Bartosz Bieganowski\thanks{Email address:
    \texttt{bartoszb@mat.umk.pl}}} \affil{\small Nicolaus Copernicus
  University \\ Faculty of Mathematics and Computer Science \\ ul. Chopina
  12/18, 87-100 Toru\'n, Poland}

\author{Simone Secchi\thanks{Email address:
    \texttt{Simone.Secchi@unimib.it}}} \affil{\small Dipartimento di
  Matematica e Applicazioni \\ Universit\`a degli Studi di
  Milano-Bicocca \\
  via Roberto Cozzi 55, I-20125, Milano, Italy}

\maketitle

\begin{abstract} 
  We show that ground state solutions to the nonlinear, fractional problem
  \begin{equation*} 
  \begin{cases}
    (-\Delta)^{s} u + V(x) u = f(x,u) &\quad \mathrm{in} \ \Omega, \\
    u = 0 &\quad \mathrm{in} \ \R^N \setminus \Omega,
\end{cases}
\end{equation*} 
on a bounded domain  $\Omega \subset \R^N$, converge (along a subsequence) in $L^2 (\Omega)$, under suitable conditions on $f$ and $V$, to a solution of the local problem as $s \to 1^-$.

\medskip

\noindent \textbf{Keywords:} variational methods, fractional Schr\"odinger equation, non-local to local transition, ground state, Nehari manifold.
   
\noindent \textbf{AMS Subject Classification:}  35Q55, 35A15, 35R11
\end{abstract}

\tableofcontents

\section{Introduction}

The aim of this paper is to analyze the asymptotic behavior of least-energy solutions to the fractional Schr\"odinger problem
\begin{equation} \label{eq:1.1}
  \begin{cases}
    (-\Delta)^{s} u + V(x) u = f(x,u) &\quad \mathrm{in} \ \Omega, \\
    u = 0 &\quad \mathrm{in} \ \R^N \setminus \Omega,
\end{cases}
\end{equation}
in a bounded domain $\Omega \subset \mathbb{R}^N$. We recall that the fractional laplacian is defined as a singular integral via the formula
\begin{align*}
	(-\Delta)^s u (x) = C(N,s) \lim_{\varepsilon \to 0} \int_{\mathbb{R}^N \setminus B_\varepsilon(x)} \frac{u(x)-u(y)}{|x-y|^{N+2s}}\, dx\, dy
\end{align*}
with
\begin{align*}
\frac{1}{C(N,s)} = \int_{\mathbb{R}^N} \frac{1-\cos \zeta_1}{|\zeta|^{N+2s}}\, d\zeta_1 \cdots d\zeta_N.
\end{align*}
This formal definition needs of course a function space in which problem \eqref{eq:1.1} becomes meaningful: we will come to this issue in \ref{sect:variational-setting}.

\medskip

Several models have appeared in recent years that involve the use of the fractional laplacian. We only mention elasticity, turbulence, porous media flow, image processing, wave propagation in heterogeneous high contrast media, and stochastic models: see \cite{B,DPV,V,G}.

\medskip

Instead of \emph{fixing} the value of the parameter $s \in (0,1)$, we will start from the well-known identity (see \cite[Proposition 4.4]{DNPV})
\begin{align*}
\lim_{s \to 1^-} (-\Delta)^s u = -\Delta u, \quad\hbox{$u \in C_0^\infty(\mathbb{R}^N)$},
\end{align*}
and investigate the convergence properties of solutions to \eqref{eq:1.1} as $s \to 1$. In view of the previous limit, it is somehow natural to conjecture that solutions to \eqref{eq:1.1} converge to solutions of the problem
\begin{align} \label{eq:1.2}
\begin{cases}
-\Delta u + V(x) u = f(x,u) &\hbox{in $\Omega$} \\
u =0 &\hbox{in $\mathbb{R}^N \setminus \Omega$}.
\end{cases}
\end{align}
We do not know if this conjecture is indeed correct with this degree of generality, but we will prove that this happens --- up to a subsequence --- for \emph{least-energy} solutions. Our result extends the very recent analysis of Biccari \emph{et al.} (see~\cite{BHS}) in the \emph{linear} case for the Poisson problem to the semilinear case. See also \cite{BC}. 

\medskip

We collect our assumptions.
\begin{itemize}
\item[(N)] $N \geq 3$, $1/2<s<1$;
\item[($\Omega$)] $\Omega \subset \R^N$ is bounded domain with continuous boundary
$\partial \Omega$;
\item[(V)] $V \in L^\infty(\Omega)$ and $\inf_{\Omega} V >  0$;
\item[(F1)] $f \colon \Omega \times \mathbb{R} \to \mathbb{R}$ is a Carath\'eodory function, namely $f(\cdot, u)$ is measurable for any $u \in \R$ and $f(x, \cdot)$ is continuous for a.e. $x \in \Omega$. Moreover there numbers~are $C > 0$ and $p \in \left(2, \frac{2N}{N-1} \right)$ such that
\begin{align*}
|f(x,u)| \leq C (1 + |u|^{p-1})
\end{align*}
for $u \in \R$ and a.e. $x \in \Omega$.
\item[(F2)] $f(x,u)=o(u)$ as $u \to 0$, uniformly with respect to $x
  \in \Omega$.
\item[(F3)] $\lim_{|u| \to +\infty} \frac{F(x,u)}{u^2}=+\infty$
  uniformly with respect to $x \in \Omega$, where
  $F(x,u)=\int_0^u f(x,s)\, ds$.
\item[(F4)] The function $\R \setminus \{0\} \ni u \mapsto f(x,u)/u$ is strictly increasing on $(-\infty, 0)$ and on $(0, \infty)$, for a.e. $x \in \Omega$.
\end{itemize}

\begin{Rem} \label{rem:1.1}
  It follows from (F1) and (F2) that for
  every~$\varepsilon > 0$ there is~$C_\varepsilon > 0$ such that
\[
|f(x,u)| \leq \varepsilon |u| + C_\varepsilon |u|^{p-1}
\]
for every~$u \in \mathbb{R}$ and a.e~$x \in \Omega$.
Furthermore, assumption~(F4) implies the validity of the inequality
\begin{align*}
f(x,u)u \geq 2F(x,u)
\end{align*}
for every~$u \in \R$ and a.e.~$x \in \Omega$.
\end{Rem}

We can now state our main result.
\begin{Th}\label{th:main}
  Suppose that assumptions~(N), ($\Omega$), (V), (F1)--(F4) hold. For $1/2<s<1$, let $u_s \in H_0^s(\Omega)$ be a ground state
  solution of problem \eqref{eq:1.1}. Then, there is a sequence $\{s_n\}_n \subset (1/2, 1)$ such that $s_n \to 1$ as $n \to + \infty$ and $u_{s_n}$
  converges in $L^2(\Omega)$ to a ground state solution
  $u_0 \in H_0^1(\Omega)$ of the problem \eqref{eq:1.2}.
\end{Th}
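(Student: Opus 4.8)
The plan is to carry out the Nehari--manifold construction uniformly in $s$ and then let $s\to 1^-$ using a Bourgain--Brezis--Mironescu (BBM) type compactness. For $u$ extended by $0$ outside $\Omega$, set $J_s(u)=\tfrac{1}{2}\|u\|_s^2-\int_\Omega F(x,u)\,dx$ with $\|u\|_s^2:=[u]_s^2+\int_\Omega V(x)u^2\,dx$ and $[u]_s^2:=\tfrac{C(N,s)}{2}\iint_{\R^N\times\R^N}\frac{|u(x)-u(y)|^2}{|x-y|^{N+2s}}\,dx\,dy$, and let $\cN_s=\{u\neq 0:J_s'(u)[u]=0\}$, $c_s=\inf_{\cN_s}J_s=J_s(u_s)$; define $J_0,\cN_0,c_0$ analogously for \eqref{eq:1.2}. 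By (F1)--(F4) and Remark~\ref{rem:1.1} one has $f(x,u)u\geq 2F(x,u)$, the map $t\mapsto J_s(tu)$ has a unique positive maximum (the Nehari projection), and $c_s=\inf_{u\neq0}\max_{t>0}J_s(tu)$; moreover $H_0^1(\Omega)\subset H_0^s(\Omega)$ since $\Omega$ is bounded, and the crucial numerical point is $p<\tfrac{2N}{N-1}<\tfrac{2N}{N-2s}$ for every $s\in(1/2,1)$, so that the embeddings $H_0^s(\Omega)\hookrightarrow L^q(\Omega)$, $q\in[2,p]$, are compact with constants that stay bounded as $s\to 1^-$. The first step I would establish is a two--sided control $0<\rho\leq c_s\leq c_0+o(1)$: for the upper bound, take a ground state $u_0$ of \eqref{eq:1.2} and its Nehari projection $t_su_0\in\cN_s$; since $[u_0]_s^2\to\|\nabla u_0\|_{L^2}^2$ as $s\to 1^-$ (\cite[Prop.~4.4]{DNPV}), the defining relation of $t_s$ together with (F2)--(F4) forces $t_s\to 1$, so $c_s\leq J_s(t_su_0)\to J_0(u_0)=c_0$; for the lower bound, normalize $v_s=u_s/\|u_s\|_s$ and use Remark~\ref{rem:1.1} with the $s$-uniform embeddings to get $c_s=\max_{t>0}J_s(tv_s)\geq\max_{t>0}\big(\tfrac{t^2}{4}-Ct^p\big)=:\rho>0$, with $\rho$ independent of $s$.

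\textbf{Uniform bound and compactness.} Next I would show $\sup_s\|u_s\|_s<\infty$ for $s$ near $1$. If not, pick $s\to 1^-$ with $\|u_s\|_s\to\infty$ and set $w_s=u_s/\|u_s\|_s$; by the uniform compact embedding, along a subsequence $w_s\to w$ in $L^2(\Omega)$. If $w\neq 0$, then (F3) and Fatou give $\int_\Omega F(x,u_s)/\|u_s\|_s^2\to+\infty$, contradicting $c_s/\|u_s\|_s^2\to 0$; if $w=0$, then for each fixed $R>0$ one has $c_s\geq J_s(Rw_s)=\tfrac{R^2}{2}-\int_\Omega F(x,Rw_s)$, and Remark~\ref{rem:1.1} with $w_s\to 0$ in $L^2\cap L^p$ makes the right--hand side tend to $\tfrac{R^2}{2}$, contradicting $\sup_s c_s<\infty$ once $R$ is large. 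With $\sup_s\|u_s\|_s<\infty$ in hand, the BBM compactness (uniformly bounded $L^2$-norms and Gagliardo energies, in the $C(N,s)$-normalization) provides $s_n\to 1^-$ and $u_0\in H_0^1(\Omega)$ with $u_{s_n}\to u_0$ in $L^2(\Omega)$ --- hence, interpolating with the uniform $L^{2^{*}_{s_n}}$-bound, in $L^q(\Omega)$ for every $q\in[2,p]$ --- and $\|\nabla u_0\|_{L^2}^2\leq\liminf_n[u_{s_n}]_{s_n}^2$.

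\textbf{Identification of the limit.} For $\varphi\in C_c^\infty(\Omega)$ the weak form of \eqref{eq:1.1} reads $b_{s_n}(u_{s_n},\varphi)+\int_\Omega Vu_{s_n}\varphi\,dx=\int_\Omega f(x,u_{s_n})\varphi\,dx$, where $b_{s_n}$ is the bilinear form polarizing $[\cdot]_{s_n}^2$ and $b_{s_n}(u_{s_n},\varphi)=\int_{\R^N}u_{s_n}\,(-\Delta)^{s_n}\varphi\,dx$. Since $(-\Delta)^{s_n}\varphi\to-\Delta\varphi$ in $L^2(\R^N)$ (a quantitative form of \cite[Prop.~4.4]{DNPV}) and $u_{s_n}\to u_0$ in $L^2(\R^N)$, the first term tends to $\int_\Omega\nabla u_0\cdot\nabla\varphi\,dx$; the potential term converges by $L^2$-convergence, and the nonlinear term by the $L^p$-convergence together with (F1)--(F2) (a Krasnoselskii argument), so $u_0$ solves \eqref{eq:1.2} weakly. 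Moreover $u_0\neq 0$, since otherwise $c_{s_n}=\int_\Omega\big(\tfrac{1}{2} f(x,u_{s_n})u_{s_n}-F(x,u_{s_n})\big)\,dx\to 0$ by the $L^2\cap L^p$-convergence and (F1), contradicting $c_{s_n}\geq\rho$. Hence $u_0\in\cN_0$, so $J_0(u_0)\geq c_0$; conversely, lower semicontinuity of $[\cdot]_{s_n}^2$, $\int_\Omega Vu_{s_n}^2\to\int_\Omega Vu_0^2$, and $\int_\Omega F(x,u_{s_n})\to\int_\Omega F(x,u_0)$ yield $J_0(u_0)\leq\liminf_n J_{s_n}(u_{s_n})=\liminf_n c_{s_n}\leq c_0$ by the first step. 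Therefore $J_0(u_0)=c_0$, i.e. $u_0$ is a ground state of \eqref{eq:1.2}, which is the assertion.

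\textbf{Expected main obstacle.} The analytic heart is the $s$-uniformity: that the fractional Sobolev embeddings and their compactness hold with constants bounded as $s\to 1^-$ (which is exactly why $p<\tfrac{2N}{N-1}$ is imposed, this exponent being subcritical for all $s>1/2$), together with the BBM compactness and the companion convergence $(-\Delta)^{s}\varphi\to-\Delta\varphi$ with the sharp asymptotics of $C(N,s)$. Granting those, the rest is routine Nehari bookkeeping, the only delicate point being the exclusion of $u_0=0$, which rests on the uniform lower bound $c_s\geq\rho$.
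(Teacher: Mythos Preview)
Your proposal is correct and follows essentially the same route as the paper: the upper bound $\limsup c_s\le c_0$ via Nehari projection of a local ground state with $t_s\to 1$, the uniform bound on $\|u_s\|_s$ by the same dichotomy ($w=0$ vs.\ $w\neq 0$), BBM compactness to extract $u_{s_n}\to u_0$ in $L^2$ (upgraded to $L^p$ by interpolation), passage to the limit in the weak formulation via $(-\Delta)^{s_n}\varphi\to -\Delta\varphi$ in $L^2$, and nontriviality of $u_0$ from the uniform lower bound on the ground-state levels.

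The only noteworthy difference is in the closing inequality $\cJ(u_0)\le c_0$. You obtain it from the BBM lower semicontinuity $\|\nabla u_0\|_{L^2}^2\le\liminf_n[u_{s_n}]_{s_n}^2$ together with convergence of the lower-order terms. The paper instead bypasses any semicontinuity of the Gagliardo energy: using the Nehari constraint it rewrites
\[
c_{s_n}=\cJ_{s_n}(u_{s_n})-\tfrac12\cJ_{s_n}'(u_{s_n})[u_{s_n}]
=\int_\Omega\Big(\tfrac12 f(x,u_{s_n})u_{s_n}-F(x,u_{s_n})\Big)\,dx,
\]
an expression involving only the nonlinearity, and passes to the limit directly by the $L^2\cap L^p$ convergence of $u_{s_n}$, obtaining in fact the \emph{equality} $\lim_n c_{s_n}=\cJ(u_0)$. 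Both arguments are valid; the paper's avoids invoking the lower-semicontinuity part of BBM, while yours is perhaps more in the spirit of $\Gamma$-convergence.
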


\begin{Rem}
Actually, see Corollary \ref{cor:2.9}, it follows that  $u_{s_n}$ converges to $u_0$ in $L^\nu (\Omega)$ for every $2 \leq \nu < \frac{2N}{N-1}$.
\end{Rem}
\begin{Rem}
Unlike in \cite{BHS}, we cannot expect a convergence of the \emph{family} $\{u_s\}_{s}$ as $s \to 1^-$, since solutions to \eqref{eq:1.2} are not unique, in general.
\end{Rem}

\medskip

The paper is organized as follows. The second section contains a short introduction into fractional Sobolev spaces and the variational setting. In the third section we give the sketch of the proof of existence of ground states to \eqref{eq:1.1}. The fourth section is devoted to the proof of Theorem \ref{th:main}.

\section{The variational setting}\label{sect:variational-setting}

In this section we collect the basic tools from the theory of fractional Sobolev spaces we will need to prove our results. For a thorough discussion, we refer to \cite{MBRS,DNPV} and to the references therein.

\medskip

We define a Sobolev space on $\Omega$ as
\begin{align*}
  H^s(\Omega)= \left\{ u \in L^2(\Omega) \mid \int_{\Omega \times \Omega} \frac{|u(x)-u(y)|^2}{|x-y|^{N+2s}} \, dx\, dy < + \infty \right\},
\end{align*}
endowed with the norm
\begin{align*}
  \|u\|_{H^s(\Omega)}^2 = \|u\|_{L^2(\Omega)}^2 + \int_{\Omega \times \Omega} \frac{|u(x)-u(y)|^2}{|x-y|^{N+2s}} \, dx\, dy
\end{align*}
Furthermore, $H_0^s(\Omega)$ is the closure of $C_0^\infty(\Omega)$
with respect to the $H^s(\Omega)$-norm.
\begin{Def}
  For $0<s<1$, we define $X^s(\Omega)$ as the set of all measurable
  functions $u \colon \mathbb{R}^N \to \mathbb{R}$ such that the
  restriction of $u$ to $\Omega$ lies in $L^2(\Omega)$ and the map
\begin{align*}
  \mathbb{R}^N \times \mathbb{R}^N  \ni (x,y) \mapsto \frac{u(x)-u(y)}{|x-y|^{\frac{N}{2}-s}}
\end{align*}
belongs to $L^2(Q)$, where
\begin{align*}
  Q = \left( \mathbb{R}^N \times \mathbb{R}^N \right) \setminus \left( \Omega^c \times \Omega^c \right)
\end{align*}
and $\Omega^c = \mathbb{R}^N \setminus \Omega$.
 We also define
\begin{align*}
  X_0^s(\Omega) = \left\{ u \in X^s(\Omega) \mid \hbox{$u=0 $ a.e. in $\mathbb{R}^N \setminus \Omega$} \right\}.
\end{align*}
\end{Def}
It is well know, see \cite[Lemma 1.24]{MBRS}, that
$X^s(\Omega) \subset H^s(\Omega)$ with a continuous embedding, and
that
\begin{align*}
  X_0^s(\Omega) = \left\{ u \in H^s(\Omega) \mid \hbox{$u=0$ on $\Omega^c$} \right\}.
\end{align*}
Since we assume that~$\Omega$ has a continuous boundary
$\partial \Omega$, $C_0^\infty(\Omega)$ is dense in $X_0^s(\Omega)$
(see \cite[Theorem 2.6]{MBRS}), so that actually
$X_0^s(\Omega)=H_0^s(\Omega)$ for such a domain $\Omega$.  For
$u \in X_0^s(\Omega)$, an equivalent norm of $u$ is (see
\cite[Proposition 1.18]{MBRS})
\begin{align*}
	\|u\|_{X_0^s (\Omega)}^2 = \|u\|_{L^2(\Omega)}^2 + \left\| (-\Delta)^{\frac{s}{2}} u \right\|_{L^2(\mathbb{R}^N)}^2.
\end{align*}
More explicitly, for every $u \in X_0^s(\Omega)$,
\begin{align*}
\int_{\mathbb{R}^N \times \mathbb{R}^N} \frac{|u(x)-u(y)|^2}{|x-y|^{N+2s}}\, dx\, dy = \frac{2}{C(N,s)} \left\| (-\Delta)^{s/2} u \right\|^2_{L^2(\mathbb{R}^N)},
\end{align*}
where
\begin{align*}
C(N,s) &=\frac{s(1-s)}{A(N,s)B(s)}, \\
A(N,s) &= \int_{\mathbb{R}^{N-1}} \frac{d \eta}{(1+|\eta|^2)^{(N+2s)/2}}, \\
B(s) &= s (1-s) \int_{\mathbb{R}} \frac{1-\cos t}{|t|^{1+2s}} \, dt.
\end{align*}
\begin{Lem}
  For every $u \in H^1(\mathbb{R}^N)$, there results
  \begin{align*}
    \lim_{s \to 1^-} \left\| (-\Delta)^{s/2} u
    \right\|_{L^2(\mathbb{R}^N)}^2 = \left\| \nabla u
    \right\|_{L^2(\mathbb{R}^N)}^2.
  \end{align*}
\end{Lem}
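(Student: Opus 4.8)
The plan is to pass to the Fourier side, where both quantities in question have clean representations. For $u \in H^1(\mathbb{R}^N)$ one has, by Plancherel, $\|\nabla u\|_{L^2(\mathbb{R}^N)}^2 = (2\pi)^{?} \int_{\mathbb{R}^N} |\xi|^2 |\widehat{u}(\xi)|^2 \, d\xi$ (with the normalization constant depending on the chosen Fourier convention), and similarly the fractional operator acts as a Fourier multiplier with symbol $|\xi|^{2s}$ (up to the same constant), so that $\|(-\Delta)^{s/2} u\|_{L^2(\mathbb{R}^N)}^2 = (2\pi)^{?} \int_{\mathbb{R}^N} |\xi|^{2s} |\widehat{u}(\xi)|^2 \, d\xi$. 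Thus the statement reduces to showing
\begin{align*}
\lim_{s \to 1^-} \int_{\mathbb{R}^N} |\xi|^{2s} |\widehat{u}(\xi)|^2 \, d\xi = \int_{\mathbb{R}^N} |\xi|^{2} |\widehat{u}(\xi)|^2 \, d\xi .
\end{align*}

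Next I would justify the passage to the limit inside the integral. The natural tool is dominated convergence together with a splitting of $\mathbb{R}^N$ into $\{|\xi| \le 1\}$ and $\{|\xi| > 1\}$. On $\{|\xi| \le 1\}$ we have $|\xi|^{2s} \le 1 \in L^1$ against the finite measure $|\widehat u(\xi)|^2 \, d\xi$ (recall $u \in L^2$, so $\widehat u \in L^2$), and pointwise $|\xi|^{2s} \to |\xi|^2$. On $\{|\xi| > 1\}$, for $s \in (1/2,1)$ we have $|\xi|^{2s} \le |\xi|^2$, and $|\xi|^2 |\widehat u(\xi)|^2$ is integrable there precisely because $u \in H^1(\mathbb{R}^N)$; again $|\xi|^{2s} \to |\xi|^2$ pointwise. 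Applying dominated convergence on each piece and adding gives the claim. Alternatively, one can observe that $s \mapsto |\xi|^{2s}$ is monotone in $s$ on each of the two regions and invoke monotone convergence, but the dominated convergence argument is cleanest and avoids sign bookkeeping.

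A final bookkeeping point is to make the constants match: the definition of $(-\Delta)^{s/2}$ used in the paper carries the normalizing constant $C(N,s)$, and one should check that with this normalization the Fourier symbol is exactly $|\xi|^{2s}$ (this is standard, e.g.\ \cite[Section 3]{DNPV}), so that the $s \to 1$ limit of the symbol side transfers directly to $\|\nabla u\|_{L^2}^2$ without any leftover constant; the constant $C(N,s)$ behaves well as $s \to 1^-$ and in fact conspires to produce exactly the gradient. I do not expect any genuine obstacle here: the only mild subtlety is confirming the domination on the low-frequency region, where the crude bound $|\xi|^{2s} \le 1$ suffices, and keeping the Fourier-multiplier normalization consistent with the paper's definition of $C(N,s)$; both are routine once the problem is set up on the Fourier side.
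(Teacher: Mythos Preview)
Your argument is correct. The dominated-convergence step is clean; in fact you can streamline it further with the single majorant $|\xi|^{2s}\le 1+|\xi|^2$ valid for all $s\in(0,1)$ and all $\xi$, so no splitting is needed. The normalization issue you flag is a non-issue: by \cite[Proposition~3.3]{DNPV} the constant $C(N,s)$ is chosen precisely so that the Fourier symbol of $(-\Delta)^{s/2}$ is exactly $|\xi|^{s}$, with no residual factor.

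The paper's proof takes a different route: it works on the physical side, writing $\|(-\Delta)^{s/2}u\|_{L^2}^2$ via the Gagliardo seminorm with the explicit constant $C(N,s)/2$, then invokes the Bourgain--Brezis--Mironescu-type limit \cite[Remark~4.3]{DNPV} for $(1-s)$ times the seminorm and the asymptotics $C(N,s)/(1-s)\to 4N/\omega_{N-1}$ from \cite[Corollary~4.2]{DNPV}, and checks that the two constants cancel to give $\|\nabla u\|_{L^2}^2$. Your Fourier-side argument is more elementary and self-contained, avoiding any appeal to the BBM machinery or to the precise asymptotics of $C(N,s)$; the paper's approach, by contrast, ties the lemma explicitly to the constant computations in \cite{DNPV}, which has the virtue of making transparent why the normalizing constant in the singular-integral definition is the ``right'' one.
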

\begin{proof}
  From \cite[Proposition 3.6]{DNPV}, we know that
  \begin{align*}
    \left\| (-\Delta)^{s/2} u \right\|_{L^2(\mathbb{R}^N)}^2 =
    \frac{C(N,s)}{2} \int_{\mathbb{R}^N \times \mathbb{R}^N}
    \frac{|u(x)-u(y)|^2}{|x-y|^{N+2s}}\, dx\, dy.
  \end{align*}
  From \cite[Remark 4.3]{DNPV}, we know that
  \begin{align*}
    \lim_{s \to 1^-} (1-s) \int_{\mathbb{R}^N \times \mathbb{R}^N}
    \frac{|u(x)-u(y)|^2}{|x-y|^{N+2s}}\, dx\, dy =
    \frac{\omega_{N-1}}{2N} \left\| \nabla u \right\|_{L^2(\mathbb{R}^N)}^2.
  \end{align*}
  Therefore, recalling \cite[Corollary 4.2]{DNPV},
  \begin{align*}
    \lim_{s \to 1^-} \left\| (-\Delta)^{s/2} u
    \right\|_{L^2(\mathbb{R}^N)}^2 &= \lim_{s \to 1^-}
                                     \frac{C(N,s)}{2(1-s)}
                                     \left( (1-s) \int_{\mathbb{R}^N \times \mathbb{R}^N}
    \frac{|u(x)-u(y)|^2}{|x-y|^{N+2s}}\, dx\, dy \right) \\
    &= \frac{1}{2} \frac{4N}{\omega_{N-1}}
      \frac{\omega_{N-1}}{2N} \left\| \nabla u
      \right\|_{L^2(\mathbb{R}^N)}^2 = \left\| \nabla u \right\|_{L^2(\mathbb{R}^N)}^2.
    \end{align*}
  \end{proof}
  
  On $X^s_0 (\Omega)$ we introduce a new norm
\begin{align}\label{norm-s}
\|u\|_s^2 := \left\| (-\Delta)^s u \right\|_{L^2(\mathbb{R}^N)}^2 + \int_\Omega V(x) u^2 \, dx, \quad u \in X^s_0 (\Omega),
\end{align}
which is, under (V), equivalent to $\| \cdot \|_{X_0^s
  (\Omega)}$. Similarly we introduce the norm on $H^1_0 (\Omega)$ by
putting
\begin{align}\label{norm-1}
\|u\|^2 := \int_{\Omega} |\nabla u|^2 + V(x) u^2 \, dx, \quad u \in H^1_0(\Omega).
\end{align}  
  
\begin{Cor} \label{cor:2.3}
  For every $u \in H_0^1(\Omega)$ we have
  \begin{align*}
    \lim_{s \to 1^-} \|u\|_s = \|u\|.
  \end{align*}
\end{Cor}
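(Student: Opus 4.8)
The plan is to reduce Corollary \ref{cor:2.3} to the Lemma proved just above. The Lemma asserts that for every $u \in H^1(\mathbb{R}^N)$ we have $\lim_{s\to 1^-} \|(-\Delta)^{s/2}u\|_{L^2(\mathbb{R}^N)}^2 = \|\nabla u\|_{L^2(\mathbb{R}^N)}^2$. Given $u \in H_0^1(\Omega)$, we first extend it by zero to all of $\mathbb{R}^N$; since $u$ vanishes outside $\Omega$ and lies in $H_0^1(\Omega)$, this zero-extension (still denoted $u$) belongs to $H^1(\mathbb{R}^N)$, with $\|\nabla u\|_{L^2(\mathbb{R}^N)} = \|\nabla u\|_{L^2(\Omega)}$ and $\|u\|_{L^2(\mathbb{R}^N)} = \|u\|_{L^2(\Omega)}$. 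So the Lemma applies directly to this extension.

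Next I would write out the two norms. By the definition \eqref{norm-s},
\[
\|u\|_s^2 = \left\|(-\Delta)^s u\right\|_{L^2(\mathbb{R}^N)}^2 + \int_\Omega V(x)u^2\,dx,
\]
while \eqref{norm-1} gives $\|u\|^2 = \int_\Omega |\nabla u|^2\,dx + \int_\Omega V(x)u^2\,dx$. The potential term $\int_\Omega V(x)u^2\,dx$ is independent of $s$, so it contributes the same constant to both sides and needs no limiting argument. Hence it suffices to show $\lim_{s\to 1^-}\|(-\Delta)^s u\|_{L^2(\mathbb{R}^N)}^2 = \|\nabla u\|_{L^2(\mathbb{R}^N)}^2$, and then conclude by taking square roots (legitimate since both quantities are nonnegative and the limit of the squared norm is the square of the $H_0^1$-norm, which is finite). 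One small bookkeeping point: the exponent in \eqref{norm-s} is written as $(-\Delta)^s$, but in light of the equivalent-norm formula recalled earlier in the section (namely $\|u\|_{X_0^s(\Omega)}^2 = \|u\|_{L^2(\Omega)}^2 + \|(-\Delta)^{s/2}u\|_{L^2(\mathbb{R}^N)}^2$) this should be read as $(-\Delta)^{s/2}$; I would note this identification so that the Lemma applies verbatim.

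The only genuine subtlety — and the step I'd flag as the main obstacle, though it is mild — is justifying that the zero-extension of $u \in H_0^1(\Omega)$ genuinely lies in $H^1(\mathbb{R}^N)$. This is standard: $C_0^\infty(\Omega)$ functions are trivially in $H^1(\mathbb{R}^N)$ after extension by zero, the zero-extension operator $H_0^1(\Omega) \to H^1(\mathbb{R}^N)$ is an isometry onto its image for the gradient and $L^2$ seminorms (integration by parts shows the distributional gradient of the extension is the zero-extension of $\nabla u$), and it is continuous, hence the extension of a general $u \in H_0^1(\Omega)$ is in $H^1(\mathbb{R}^N)$ by density. With this in hand the corollary follows immediately: adding the fixed potential term to the conclusion of the Lemma and taking square roots gives $\lim_{s\to 1^-}\|u\|_s = \|u\|$.
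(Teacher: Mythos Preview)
Your proposal is correct and follows exactly the route the paper intends: the Corollary is stated without proof precisely because it is an immediate consequence of the preceding Lemma once one adds the $s$-independent potential term and observes that $u\in H_0^1(\Omega)$ extends by zero to $H^1(\mathbb{R}^N)$. Your remark that the exponent in \eqref{norm-s} should be read as $(-\Delta)^{s/2}$ rather than $(-\Delta)^{s}$ is also accurate and worth noting.
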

The following convergence result will be used in the sequel.
\begin{Lem}
  For every $ \varphi \in C_0^\infty(\Omega)$, there results
  \begin{align*}
    \lim_{s \to 1^-} \left\| (-\Delta)^{s} \varphi - (-\Delta) \varphi
    \right\|_{L^2(\Omega)} =0.
  \end{align*}
\end{Lem}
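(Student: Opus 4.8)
The plan is to use the Fourier-side characterization of the fractional Laplacian together with dominated convergence. Recall that for $\varphi \in C_0^\infty(\Omega) \subset \cS(\R^N)$ we have $\widehat{(-\Delta)^s \varphi}(\xi) = |\xi|^{2s} \widehat{\varphi}(\xi)$ and $\widehat{(-\Delta)\varphi}(\xi) = |\xi|^2 \widehat{\varphi}(\xi)$. By Plancherel,
\begin{align*}
\left\| (-\Delta)^s \varphi - (-\Delta)\varphi \right\|_{L^2(\R^N)}^2 = \int_{\R^N} \bigl( |\xi|^{2s} - |\xi|^2 \bigr)^2 |\widehat{\varphi}(\xi)|^2 \, d\xi,
\end{align*}
and since $\left\| (-\Delta)^s \varphi - (-\Delta)\varphi \right\|_{L^2(\Omega)} \le \left\| (-\Delta)^s \varphi - (-\Delta)\varphi \right\|_{L^2(\R^N)}$, it suffices to show the right-hand side tends to $0$ as $s \to 1^-$.

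For each fixed $\xi \in \R^N$, $|\xi|^{2s} \to |\xi|^2$ as $s \to 1^-$, so the integrand converges pointwise to $0$. For the domination, fix $s_0 \in (1/2,1)$ and note that for $s \in (s_0,1)$ and $|\xi| \ge 1$ we have $|\xi|^{2s} \le |\xi|^2$, so $\bigl(|\xi|^{2s}-|\xi|^2\bigr)^2 \le |\xi|^4$, while for $|\xi| \le 1$ we have $|\xi|^{2s} \le |\xi|^{2s_0} \le 1$ and $|\xi|^2 \le 1$, so $\bigl(|\xi|^{2s}-|\xi|^2\bigr)^2 \le 1$. Hence the integrand is bounded, uniformly in $s \in (s_0,1)$, by $(1 + |\xi|^4)|\widehat{\varphi}(\xi)|^2$, which is integrable because $\varphi \in \cS(\R^N)$ forces $\widehat{\varphi} \in \cS(\R^N)$ (in particular $|\xi|^4 |\widehat\varphi(\xi)|^2$ decays faster than any polynomial). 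The dominated convergence theorem then yields
\begin{align*}
\lim_{s \to 1^-} \int_{\R^N} \bigl( |\xi|^{2s} - |\xi|^2 \bigr)^2 |\widehat{\varphi}(\xi)|^2 \, d\xi = 0,
\end{align*}
which completes the proof.

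The only mild subtlety — the "main obstacle", such as it is — is ensuring a single integrable dominating function valid for all $s$ near $1$; this is handled by splitting into $|\xi| \le 1$ and $|\xi| \ge 1$ as above and using that the Schwartz decay of $\widehat\varphi$ absorbs the polynomial growth of $(|\xi|^{2s}-|\xi|^2)^2$. An alternative route, avoiding the Fourier transform, would be to write $(-\Delta)^s\varphi - (-\Delta)\varphi$ via the singular-integral representation and exploit the pointwise convergence $(-\Delta)^s\varphi(x) \to -\Delta\varphi(x)$ (cited from \cite[Proposition 4.4]{DNPV}) together with uniform $L^2(\Omega)$-type bounds on $(-\Delta)^s\varphi$; but the Plancherel argument is cleaner and self-contained.
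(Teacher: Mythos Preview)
Your proof is correct and follows essentially the same route as the paper: both bound the $L^2(\Omega)$ norm by the $L^2(\R^N)$ norm, pass to the Fourier side via Plancherel to reduce to $\int_{\R^N} (|\xi|^{2s}-|\xi|^2)^2 |\widehat{\varphi}(\xi)|^2\,d\xi$, and conclude using the rapid decay of $\widehat{\varphi}$. The paper simply says ``it is now easy to conclude,'' whereas you spell out the dominated-convergence argument explicitly.
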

\begin{proof}
  We notice that
  \begin{multline*}
    \left\| (-\Delta)^{s} \varphi - (-\Delta) \varphi
    \right\|_{L^2(\Omega)} \leq \left\| (-\Delta)^{s} \varphi - (-\Delta) \varphi
    \right\|_{L^2(\mathbb{R}^N)} = \left\| \mathcal{F}^{-1}_\xi \left( \left(
    |\xi|^{2s} - |\xi|^2 \right) \hat{\varphi}(\xi) \right)
                             \right\|_{L^2(\mathbb{R}^N)} \\
    \leq C \left\| \left(
    |\cdot|^{2s} - |\cdot|^2 \right) \hat{\varphi}
                             \right\|_{L^2(\mathbb{R}^N)}
  \end{multline*}
  where $C>0$ is a constant, independent of $s$, that depends on the
  definition of the Fourier transform $\cF$. It is now easy to conclude,
  since the Fourier transform of a test function is a rapidly
  decreasing function.
\end{proof}

We will use the following embedding result.
\begin{Th}[\cite{MBRS}] \label{th:2.5}
  If $\Omega$ has a continuous boundary $\partial \Omega$, then the
  embedding $X_0^s(\Omega) \subset L^\nu(\Omega)$ is compact for every
  $1 \leq \nu < 2_s^*$, where $2_s^*=2N/(N-2s)$.
\end{Th}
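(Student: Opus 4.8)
The plan is to deduce Theorem~\ref{th:2.5} from the classical Fréchet--Kolmogorov compactness criterion, together with the fractional Sobolev inequality and an interpolation step. First, for $u \in X_0^s(\Omega)$ extended by zero outside $\Omega$, one has $u \in H^s(\R^N)$ and the fractional Sobolev inequality provides a constant $S = S(N,s) > 0$ with $\|u\|_{L^{2_s^*}(\R^N)} \le S \|(-\Delta)^{s/2} u\|_{L^2(\R^N)} \le S \|u\|_{X_0^s(\Omega)}$; since $\Omega$ is bounded, Hölder's inequality upgrades this to the continuous embedding $X_0^s(\Omega) \hookrightarrow L^\nu(\Omega)$ for every $1 \le \nu \le 2_s^*$. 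It remains to prove compactness.

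\emph{Compactness in $L^2(\Omega)$.} Let $(u_n)_n$ be bounded in $X_0^s(\Omega)$, say $\|u_n\|_{X_0^s(\Omega)} \le M$, and view each $u_n$ as an element of $L^2(\R^N)$ supported in the fixed compact set $\overline{\Omega}$. Since the supports are uniformly bounded, the part of the Fréchet--Kolmogorov hypothesis concerning mass at infinity is automatic, and it only remains to estimate translations uniformly in $n$. Passing to the Fourier transform, with normalisation constant $C = C(N)$, and using Plancherel's identity,
\[
  \|u_n(\cdot + h) - u_n\|_{L^2(\R^N)}^2 = C \int_{\R^N} |e^{\mathrm{i} h \cdot \xi} - 1|^2 \, |\widehat{u_n}(\xi)|^2 \, d\xi .
\]
From $|e^{\mathrm{i}\theta} - 1|^2 = 4 \sin^2(\theta/2) \le \min\{4, \theta^2\} \le 4^{1-s} |\theta|^{2s}$ and $|h \cdot \xi| \le |h|\,|\xi|$ we get $|e^{\mathrm{i} h \cdot \xi} - 1|^2 \le 4^{1-s} |h|^{2s} |\xi|^{2s}$, hence
\[
  \|u_n(\cdot + h) - u_n\|_{L^2(\R^N)}^2 \le 4^{1-s} |h|^{2s} \, \|(-\Delta)^{s/2} u_n\|_{L^2(\R^N)}^2 \le 4^{1-s} M^2 |h|^{2s},
\]
which tends to $0$ as $h \to 0$, uniformly in $n$. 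By the Fréchet--Kolmogorov theorem, a subsequence of $(u_n)_n$ converges in $L^2(\R^N)$, hence in $L^2(\Omega)$.

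\emph{Subcritical range.} For $2 \le \nu < 2_s^*$ choose $\theta \in [0,1)$ with $\frac{1}{\nu} = \frac{1-\theta}{2} + \frac{\theta}{2_s^*}$. Along the subsequence from the previous step, interpolation gives $\|u_n - u_m\|_{L^\nu(\Omega)} \le \|u_n - u_m\|_{L^2(\Omega)}^{1-\theta} \|u_n - u_m\|_{L^{2_s^*}(\Omega)}^{\theta}$; the first factor is Cauchy by the $L^2$-compactness and the second is bounded by the continuous embedding, so $(u_n)_n$ has a subsequence converging in $L^\nu(\Omega)$. For $1 \le \nu < 2$ the conclusion follows from the case $\nu = 2$ and the continuous inclusion $L^2(\Omega) \hookrightarrow L^\nu(\Omega)$, valid since $|\Omega| < \infty$.

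The \textbf{main obstacle} is the $L^2$-compactness: the whole argument rests on the uniform smallness of the translates, and it is precisely the elementary estimate $|e^{\mathrm{i}\theta} - 1|^2 \le 4^{1-s}|\theta|^{2s}$ that converts the $H^s$-bound into the required $L^2$-modulus of continuity. One should also keep track of the dependence of the constants on $s$ (they remain bounded for $s$ in a compact subset of $(0,1)$, which is what matters elsewhere in the paper), although for a single fixed $s$ this is irrelevant. Note finally that continuity of $\partial\Omega$ plays no role in the compactness itself --- functions of $X_0^s(\Omega)$ vanish off $\Omega$ --- and enters only through the identification $X_0^s(\Omega) = H_0^s(\Omega)$ recalled above.
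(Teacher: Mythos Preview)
The paper does not prove Theorem~\ref{th:2.5} at all: it is stated with the citation \cite{MBRS} and used as a black box. Your argument is therefore not comparable to anything in the paper, but it is a correct, self-contained proof. The Fourier computation is clean --- the key inequality $|e^{\mathrm i\theta}-1|^2 \le \min\{4,\theta^2\} \le 4^{1-s}|\theta|^{2s}$ is easily checked by splitting at $|\theta|=2$ --- and together with the common compact support in $\overline\Omega$ it feeds directly into Fr\'echet--Kolmogorov for the $L^2$ step; interpolation against the (continuous) critical embedding then handles the whole subcritical range.

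Two very minor remarks. First, elements of $X_0^s(\Omega)$ are by definition already functions on $\R^N$ vanishing on $\Omega^c$, so no ``extension by zero'' is needed; moreover the Gagliardo integral over $\Omega^c\times\Omega^c$ is then zero, which is exactly why such $u$ lie in $H^s(\R^N)$. Second, your closing observation is correct and worth emphasising: the compactness holds for any bounded open $\Omega$, and the continuity of $\partial\Omega$ is used in the paper only to identify $X_0^s(\Omega)$ with $H_0^s(\Omega)$ via density of $C_0^\infty(\Omega)$.
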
 
We will need some precise information on the embedding constant for
fractional Sobolev spaces. 
\begin{Th}[\cite{KT}]
  Let $N>2s$ and $2_s^*=2N/(N-2s)$. Then
  \begin{align} \label{eq:2.1}
    \|u\|_{L^{2_s^*} (\R^N)}^2 \leq \frac{\Gamma \left( \frac{N-2s}{2}
    \right)}{\Gamma \left( \frac{N+2s}{2} \right)} \left| \mathbb{S}
    \right|^{-\frac{2s}{N}} \| (-\Delta)^{s/2} u \|^2_{L^2 (\R^N)}
  \end{align}
  for every $u \in H^s(\mathbb{R}^N)$, where $\mathbb{S}$ denotes the $N$-dimensional unit sphere and $|\mathbb{S}|$ its surface area.
\end{Th}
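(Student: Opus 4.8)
The statement to be proved is the fractional Sobolev inequality~\eqref{eq:2.1}, asserting the bound
\[
\|u\|_{L^{2_s^*}(\R^N)}^2 \leq \frac{\Gamma\!\left(\frac{N-2s}{2}\right)}{\Gamma\!\left(\frac{N+2s}{2}\right)} |\mathbb{S}|^{-\frac{2s}{N}} \|(-\Delta)^{s/2}u\|_{L^2(\R^N)}^2
\]
with the sharp constant written in closed form. Since the result is attributed to \cite{KT}, my plan is to reconstruct the standard route to the sharp fractional Sobolev (Hardy–Littlewood–Sobolev) inequality, rather than to produce a genuinely new argument. The plan is to pass to the equivalent formulation in terms of the Riesz potential. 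Writing $u = I_s v$ where $I_s = (-\Delta)^{-s/2}$ is the Riesz potential of order $s$, so that $v = (-\Delta)^{s/2}u \in L^2(\R^N)$, the claimed inequality is exactly the statement that the Riesz potential maps $L^2(\R^N)$ into $L^{2_s^*}(\R^N)$ with the indicated operator norm; note that the exponent pairing $\tfrac{1}{2} - \tfrac{s}{N} = \tfrac{1}{2_s^*}$ is precisely the scaling condition for the Hardy–Littlewood–Sobolev inequality. Thus~\eqref{eq:2.1} is equivalent to a diagonal HLS inequality with one exponent equal to $2$.

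\textbf{Key steps.}
First I would record the Riesz potential representation $u(x) = I_s v(x) = \gamma(N,s)^{-1}\int_{\R^N} |x-y|^{-(N-s)} v(y)\,dy$, fixing the normalizing constant $\gamma(N,s)$ so that the Fourier symbol of $I_s$ is $|\xi|^{-s}$; this is where the Gamma functions $\Gamma\!\left(\tfrac{N-2s}{2}\right)$ and $\Gamma\!\left(\tfrac{N+2s}{2}\right)$ enter, through the classical formula for the Fourier transform of $|x|^{-\alpha}$. Second, I would invoke the sharp Hardy–Littlewood–Sobolev inequality in the conformally invariant diagonal case, for which the extremizers are known explicitly to be the inverse multiquadrics $u(x) = (1+|x|^2)^{-(N-2s)/2}$ (up to translation, dilation, and scalar multiple). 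Third, I would evaluate the resulting best constant on such an extremizer: substituting the explicit extremal profile and computing the two integrals $\|u\|_{L^{2_s^*}}$ and $\|(-\Delta)^{s/2}u\|_{L^2}$ in closed form reduces, after the standard change of variables, to Beta-function integrals, which collapse to the ratio of Gamma functions together with the surface-area factor $|\mathbb{S}|^{-2s/N}$ displayed in~\eqref{eq:2.1}. I would organize the Gamma-function bookkeeping so that the duplication and reflection formulas reconcile my normalization of $\gamma(N,s)$ with the stated constant.

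\textbf{Main obstacle.}
The genuine content lies not in the scaling or the reduction to HLS, which are routine, but in the \emph{sharpness} of the constant, i.e.\ in knowing that the inverse multiquadrics are genuine maximizers of the HLS functional and that no loss occurs in the diagonal case. For a self-contained treatment one would appeal to the symmetrization/rearrangement argument of Lieb, or to the more recent conformal and competing-symmetries proofs, to establish both existence and the explicit form of the optimizers. The other delicate point is purely computational: carefully tracking every normalization constant so that the final expression is exactly the closed form in~\eqref{eq:2.1} rather than merely proportional to it; a single misplaced factor of $2$ or $\pi$ in the Fourier symbol of $I_s$ or in the surface area $|\mathbb{S}|$ propagates through the Beta-function evaluation. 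In the interest of our application, however, I would simply cite~\cite{KT} for the sharp value of the constant, since the subsequent arguments only need the \emph{explicit dependence} of the constant on $s$ and its boundedness as $s \to 1^-$, and reproduce here only the reduction to HLS to make the citation transparent.
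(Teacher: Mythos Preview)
The paper does not prove this theorem at all: it is stated as a citation from \cite{KT} and used as a black box, with no proof environment following it. So there is no ``paper's own proof'' to compare against; your outline is already more than what the paper provides. Your reduction to the diagonal Hardy--Littlewood--Sobolev inequality via the Riesz potential $u = I_s v$ with $v = (-\Delta)^{s/2}u$ is the standard route and is correct, and your closing remark---that for the purposes of this paper one only needs the explicit $s$-dependence of the constant and may simply cite \cite{KT}---is exactly what the authors do.
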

\begin{Lem}\label{lemma-constant-independednt}
	Let $N \geq 3$ and $q \in [2,2N/(N-1)]$. Then there exists a constant $C=C(N,q)>0$ such that, for every $s \in [1/2,1]$ and every $u \in X_0^s(\Omega)$, we have
	\begin{align*}
	\|u\|_{L^{q} (\Omega)} \leq C(N,q) \| (-\Delta)^{s/2} u \|_{L^2 (\R^N)}.
	\end{align*}
\end{Lem}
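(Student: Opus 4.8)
The plan is to reduce the claim, via Theorem~\ref{th:2.5} (or rather the Sobolev inequality \eqref{eq:2.1}) together with H\"older's inequality, to controlling the behavior of the constant in \eqref{eq:2.1} as $s$ ranges over $[1/2,1]$ and, separately, to handling the limiting case $s=1$ by the classical Sobolev inequality. Since $\Omega$ is bounded, for $q\in[2,2N/(N-1)]$ and any $s\in(1/2,1]$ we have $q< 2_s^* = 2N/(N-2s)$ as long as $s$ is bounded away from $1/2$; at $s=1/2$ the exponent $2N/(N-1)$ is exactly the critical one, so the inequality $\|u\|_{L^q(\Omega)}\le C\|(-\Delta)^{s/2}u\|_{L^2(\R^N)}$ still holds without any volume factor because $q\le 2^*_{1/2}$. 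Thus, first I would write, for $u\in X_0^s(\Omega)$ (so that $u$ vanishes outside $\Omega$ and we may work on all of $\R^N$),
\begin{align*}
\|u\|_{L^q(\Omega)} \le |\Omega|^{\frac1q-\frac{1}{2_s^*}}\,\|u\|_{L^{2_s^*}(\R^N)} \le |\Omega|^{\frac1q-\frac{1}{2_s^*}}\Big(\tfrac{\Gamma(\frac{N-2s}{2})}{\Gamma(\frac{N+2s}{2})}\Big)^{1/2} |\mathbb{S}|^{-s/N}\,\|(-\Delta)^{s/2}u\|_{L^2(\R^N)},
\end{align*}
valid for $s\in[1/2,1)$, and the analogous inequality with the classical Sobolev constant for $s=1$.

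The heart of the matter is then to show that the prefactor
\[
K(N,q,s):= |\Omega|^{\frac1q-\frac{1}{2_s^*}}\Big(\tfrac{\Gamma(\frac{N-2s}{2})}{\Gamma(\frac{N+2s}{2})}\Big)^{1/2} |\mathbb{S}|^{-s/N}
\]
stays bounded for $s\in[1/2,1)$. The exponent $\frac1q-\frac{1}{2_s^*} = \frac1q - \frac{N-2s}{2N}$ is a continuous (affine) function of $s$ on $[1/2,1]$, hence bounded, and since $|\Omega|$ is a fixed positive finite number, $|\Omega|^{\frac1q-\frac{1}{2_s^*}}$ is bounded above on $[1/2,1]$ by $\max\{1,|\Omega|\}^{\text{const}}$; the factor $|\mathbb{S}|^{-s/N}$ is obviously bounded for $s\in[1/2,1]$. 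For the Gamma quotient, note that $\frac{N-2s}{2}\in[\frac{N-2}{2},\frac{N-1}{2}]$ and $\frac{N+2s}{2}\in[\frac{N+1}{2},\frac{N+2}{2}]$ are both bounded and bounded away from the poles of $\Gamma$ (here $N\ge 3$ guarantees $\frac{N-2s}{2}\ge\frac12>0$), so $s\mapsto \Gamma(\frac{N-2s}{2})/\Gamma(\frac{N+2s}{2})$ is continuous on the compact interval $[1/2,1]$ and therefore bounded. Taking $C(N,q):=\sup_{s\in[1/2,1]}K(N,q,s)$, together with the classical Sobolev constant at $s=1$, yields a single constant depending only on $N$, $q$ (and $|\Omega|$, which is fixed once $\Omega$ is fixed) that works uniformly in $s\in[1/2,1]$.

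The main (very mild) obstacle is the borderline value $s=1/2$, where $q=2N/(N-1)$ meets the critical Sobolev exponent $2_{1/2}^*$: there one must use the pure Sobolev inequality \eqref{eq:2.1} directly (with no H\"older/volume step, i.e. the exponent $\frac1q-\frac1{2_s^*}$ vanishes) rather than embedding into a strictly subcritical space; but since \eqref{eq:2.1} already covers $s=1/2$ this causes no real trouble. The other point worth a line is that \eqref{eq:2.1} is stated for $u\in H^s(\R^N)$, so I would invoke the inclusion $X_0^s(\Omega)\subset H^s(\R^N)$ (extending $u$ by zero), which is immediate from the definition of $X_0^s(\Omega)$. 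No compactness is needed — Theorem~\ref{th:2.5} is only cited for orientation; the quantitative statement follows purely from \eqref{eq:2.1}, H\"older on the bounded set $\Omega$, and continuity of the elementary functions of $s$ appearing in the constant.
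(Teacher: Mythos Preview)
Your argument is correct. Both you and the paper rely on the same key observation: the Sobolev constant in \eqref{eq:2.1} depends continuously on $s$ over the compact interval $[1/2,1]$ (since $\Gamma$ is continuous and pole-free on the relevant range, using $N\ge 3$), hence is uniformly bounded. The difference lies in how one descends from $L^{2_s^*}$ to $L^q$. The paper interpolates between $L^2(\Omega)$ and $L^{2_s^*}(\Omega)$, using the trivial bound $\|u\|_{L^2(\Omega)}\le \|u\|_{X_0^s(\Omega)}$ for the low endpoint, and so ends with an estimate by the \emph{full} norm $\|u\|_{X_0^s(\Omega)}$. You instead apply H\"older's inequality on the bounded set $\Omega$ (picking up a factor $|\Omega|^{1/q-1/2_s^*}$) and feed the result directly into \eqref{eq:2.1}, which yields a bound by the seminorm $\|(-\Delta)^{s/2}u\|_{L^2(\R^N)}$ alone. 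Your route is slightly more direct and in fact matches the \emph{statement} of the lemma more literally, at the harmless cost of an $|\Omega|$-dependent factor (which is fine since $\Omega$ is fixed throughout the paper). Your remarks about the borderline $s=1/2$ and the case $s=1$ are accurate; in fact the constant in \eqref{eq:2.1} extends continuously to $s=1$ and recovers the classical Sobolev constant, so no separate treatment is really needed.
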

\begin{proof}
  Since $\Gamma$ is a continuous function on the interval
  $\left[ \frac{N-2}{2},\frac{N}{2} \right]$ which does not contain
  non-positive integers, the constant
	\begin{align*}
	\frac{\Gamma \left( \frac{N-2s}{2}
		\right)}{\Gamma \left( \frac{N+2s}{2} \right)} \left| \mathbb{S}
	\right|^{-\frac{2s}{N}}
	\end{align*}
	in \eqref{eq:2.1} is bounded from above independently of
        $s \in [1/2,1]$. Therefore inequality \eqref{eq:2.1} holds true
        with a constant independent of $s$. Since obviously
	\begin{align*}
	\|u\|_{L^2 (\Omega)} \leq \|u\|_{X_0^s (\Omega)}
	\end{align*}
	for every $u \in X_0^s(\Omega)$, we can fix any $q \in
        [2,2N/(N-1)]$ and interpolate:
	\begin{align*}
	\frac{1}{q}=\frac{\vartheta_s}{2}+\frac{1-\vartheta_s}{2_s^*}.
	\end{align*}
	Explicitly,
	\begin{align*}
	\vartheta_s = \frac{2N-q(N-2s)}{2sq}
	\end{align*}
	and
	\begin{align*}
	\|u\|_{L^q (\Omega)} \leq \|u\|_{L^2 (\Omega)}^{\vartheta_s} \|u\|_{L^{2_s^*} (\Omega)}^{1-\vartheta_s}\leq C(N,q)^{1-\vartheta_s} \|u\|_{X_0^s(\Omega)}.
	\end{align*}
	Since the function $s \mapsto \vartheta_s$ is continuous in
        the interval $[1/2,1]$, the proof is complete.
\end{proof}
\begin{Rem}
  It follows from the previous proof that the same result is true for
  any $s \in [s_0,1]$, with $s_0 \in (0,1)$ fixed.
\end{Rem}
\begin{Def}
  A weak solution to problem \eqref{eq:1.1} is a function
  $u \in X_0^s(\Omega)$ such that
	\begin{align*}
	\langle (-\Delta)^{s/2} u \mid (-\Delta)^{s/2} \varphi \rangle_{L^2(\mathbb{R}^N)} + \int_\Omega V(x) u \varphi \, dx = \int_\Omega f(x,u) \varphi\, dx
	\end{align*}
	for every $\varphi \in X_0^s(\Omega)$.
\end{Def}
Weak solutions are therefore critical points of the associated energy
functional $\cJ_s \colon X^s_0 (\Omega) \rightarrow \R$ defined by
\begin{align*}
  \cJ_s (u) = \frac12 \left\| (-\Delta)^s u \right\|_{L^2(\mathbb{R}^N)}^2 + \frac12
  \int_{\Omega} V(x) u^2 \, dx - \int_{\Omega} F(x,u) \, dx.
\end{align*}

We recall also the definition of a weak solution in the local case.

\begin{Def}
A weak solution to problem \eqref{eq:1.2} is a function $u \in H^1_0 (\Omega)$ such that
\begin{align*}
\int_\Omega \nabla u \cdot \nabla \varphi \, dx + \int_\Omega V(x) u \varphi \, dx = \int_\Omega f(x,u) \varphi \, dx
\end{align*}
for every $\varphi \in H^1_0(\Omega)$.
\end{Def}

For the local problem \eqref{eq:1.2} we put
$\cJ\colon H^1_0 (\Omega) \rightarrow \R$
\begin{align} \label{eq:2.2}
  \cJ(u) = \frac12 \int_{\Omega} |\nabla u|^2 + V(x) u^2 \, dx - \int_{\Omega} F(x,u) \, dx.
\end{align}

Recalling the notation \eqref{norm-s} and \eqref{norm-1}, we can rewrite our functionals in the form
\begin{align*}
\cJ_s (u) &= \frac12 \|u\|_s^2 - \int_\Omega F(x,u) \, dx, \quad u \in X^s_0 (\Omega), \\
\cJ (u) &= \frac12 \|u\|^2 - \int_{\Omega} F(x,u) \, dx, \quad u \in H^1_0 (\Omega).
\end{align*}

\section{Existence of ground states}

We define the so-called Nehari manifolds
\begin{align*}
\cN_s := \{ u \in X^{s}_0 (\Omega) \setminus \{0\} \mid  \cJ_s' (u)(u) = 0 \}
\end{align*}
and
\begin{align*}
\cN := \{ u \in H^{1}_0 (\Omega) \setminus \{0\} \mid  \cJ' (u)(u) = 0 \}.
\end{align*}
\begin{Def}
A ground state of \eqref{eq:1.1} is any minimum point of $\cJ_s$ constrained on $\cN_s$. Similarly, a ground state of \eqref{eq:1.2} is any minimum point of $\cJ$ constrained on $\cN$. 
\end{Def}

To proceed, we show that ground states actually exist. 
\begin{Prop}\label{prop:existence}
  For every $s \in (0,1]$, there exists a ground state solution
  $u_s \in \mathcal{N}_s$ to \eqref{eq:1.1}. Moreover
  \begin{equation}\label{ground-state-level}
  \cJ_s (u_s) = \inf_{v \in X^s_0(\Omega) \setminus \{0\}} \sup_{t \in [0,1]} \cJ_s (t v) > 0.
  \end{equation}
\end{Prop}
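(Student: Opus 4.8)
The plan is to run the by-now-standard Nehari-manifold (Szulkin--Weth type) argument, uniformly in $s$; since the case $s=1$ --- functional $\cJ$ on $H_0^1(\Omega)$, manifold $\cN$, problem \eqref{eq:1.2} --- is handled verbatim, with Theorem \ref{th:2.5} replaced by the Rellich--Kondrachov theorem and $\|\cdot\|_s$ by $\|\cdot\|$, I describe only $s\in(1/2,1)$. All the analytic input is available: the Hilbert norm $\|\cdot\|_s$ of \eqref{norm-s}, equivalent to $\|\cdot\|_{X_0^s(\Omega)}$ by (V); the compact embedding $X_0^s(\Omega)\hookrightarrow L^q(\Omega)$ for $q\in[2,2N/(N-1)]$, which by $s>1/2$ is a compact subinterval of $[2,2_s^*)$ (Theorem \ref{th:2.5}), with constants moreover uniform in $s$ (Lemma \ref{lemma-constant-independednt}); and the structural conditions (F1)--(F4) together with Remark \ref{rem:1.1}.

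\emph{Fibering maps and the Nehari manifold.} For $u\in X_0^s(\Omega)\setminus\{0\}$ consider $\gamma_u(t):=\cJ_s(tu)=\tfrac12 t^2\|u\|_s^2-\int_\Omega F(x,tu)\,dx$, $t\ge0$. Remark \ref{rem:1.1} gives $|F(x,\xi)|\le\tfrac{\varepsilon}{2}\xi^2+\tfrac{C_\varepsilon}{p}|\xi|^p$, hence, via Lemma \ref{lemma-constant-independednt}, $\gamma_u(t)\ge\alpha>0$ whenever $\|tu\|_s=\rho$ with $\rho$ fixed small (after fixing $\varepsilon$ small); (F3) gives $\gamma_u(t)\to-\infty$ as $t\to\infty$; and (F4) --- through the monotonicity of $\xi\mapsto f(x,\xi)/\xi$, since $\gamma_u'(t)=t\big(\|u\|_s^2-\int_\Omega \tfrac{f(x,tu)}{tu}\,u^2\,dx\big)$ --- forces $\gamma_u$ to possess a \emph{unique} critical point $t_s(u)\in(0,\infty)$, which is its strict global maximum; thus $t_s(u)u$ is the unique element of $\cN_s$ on $\R_{>0}u$, so $\cN_s\ne\emptyset$. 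Invoking the abstract Nehari-manifold machinery (see \cite{MBRS} and the references therein), the map $w\mapsto t_s(w)w$ is a homeomorphism of the unit sphere of $(X_0^s(\Omega),\|\cdot\|_s)$ onto $\cN_s$, $\cJ_s$ precomposed with it is $C^1$, its critical points are exactly those of $\cJ_s$, and $\inf_{\cN_s}\cJ_s$ is the associated minimax value; in particular any minimiser of $\cJ_s$ on $\cN_s$ is a weak solution of \eqref{eq:1.1}.

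\emph{The level.} Put $c_s:=\inf_{\cN_s}\cJ_s$. For $u\in\cN_s$ one has $\cJ_s(u)=\max_{t\ge0}\gamma_u(t)\ge\gamma_u(\rho/\|u\|_s)\ge\alpha$, so $c_s\ge\alpha>0$. The identity in \eqref{ground-state-level} is then the usual relation between the Nehari level $c_s$ and the mountain-pass level, obtained from the fibering analysis above: if $v\ne0$ has $\cJ_s(v)\le0$ then $t_s(v)<1$ (as $\gamma_v$ is positive and increasing on $[0,t_s(v)]$ while $\gamma_v(1)=\cJ_s(v)\le0$), whence $\sup_{t\in[0,1]}\cJ_s(tv)=\cJ_s(t_s(v)v)\ge c_s$; conversely, for $u\in\cN_s$ and $R\ge1$ with $\cJ_s(Ru)<0$ (which exists by (F3)) one has $\sup_{t\in[0,1]}\cJ_s(tRu)=\sup_{\tau\in[0,R]}\cJ_s(\tau u)=\cJ_s(u)$. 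Taking infima yields \eqref{ground-state-level}, so it remains to see that $c_s$ is attained.

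\emph{Attainment.} Take $(u_n)\subset\cN_s$ with $\cJ_s(u_n)\to c_s$. The crucial step --- and, since no Ambrosetti--Rabinowitz condition is assumed, the main obstacle --- is that $(u_n)$ is bounded in $X_0^s(\Omega)$. If $\|u_n\|_s\to\infty$, set $v_n:=u_n/\|u_n\|_s$; up to a subsequence $v_n\weakto v$ in $X_0^s(\Omega)$, with $v_n\to v$ in $L^q(\Omega)$ and a.e. (Theorem \ref{th:2.5}). If $v=0$, then $\int_\Omega F(x,Tv_n)\,dx\to0$ for each fixed $T>0$ (from the growth bound on $F$ and the compact embedding), hence $\cJ_s(Tv_n)\to\tfrac12T^2$, contradicting $\cJ_s(Tv_n)\le\cJ_s(u_n)=c_s+o(1)$ once $T^2>2c_s$. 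If $v\ne0$, then $|u_n|\to\infty$ a.e. on $\{v\ne0\}$, so by (F3) and Fatou's lemma (after shifting $F$ by a bounded multiple of $\xi^2$, legitimate thanks to (F2)) $\|u_n\|_s^{-2}\int_\Omega F(x,u_n)\,dx\to+\infty$, whence $\|u_n\|_s^{-2}\cJ_s(u_n)\to-\infty$, contradicting $\cJ_s\ge0$ on $\cN_s$. So $(u_n)$ is bounded; pass to $u_n\weakto u_0$ in $X_0^s(\Omega)$, with $u_n\to u_0$ in $L^q(\Omega)$ and a.e. Since $u_n\in\cN_s$, the near-zero estimate gives $\delta^2\le\|u_n\|_s^2=\int_\Omega f(x,u_n)u_n\,dx\le\varepsilon\|u_n\|_{L^2}^2+C_\varepsilon\|u_n\|_{L^p}^p\to\varepsilon\|u_0\|_{L^2}^2+C_\varepsilon\|u_0\|_{L^p}^p$, so $u_0\ne0$. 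Finally, by weak lower semicontinuity of $\|\cdot\|_s$ and continuity of $w\mapsto\int_\Omega F(x,w)\,dx$ under the compact embedding,
\begin{align*}
c_s\le\cJ_s\big(t_s(u_0)u_0\big)\le\liminf_{n\to\infty}\cJ_s\big(t_s(u_0)u_n\big)\le\liminf_{n\to\infty}\cJ_s(u_n)=c_s,
\end{align*}
the penultimate inequality because $u_n\in\cN_s$. Hence $u_s:=t_s(u_0)u_0\in\cN_s$ attains $c_s$ and, by the first part, is a ground state solution of \eqref{eq:1.1}; combined with the previous paragraph this proves \eqref{ground-state-level}.
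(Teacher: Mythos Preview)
Your existence argument is correct and runs along the same Szulkin--Weth lines the paper invokes. The paper's sketch pulls a bounded Palais--Smale sequence out of the abstract machinery (the constrained functional $\cJ_s\circ m_s$ on the sphere) and then passes to the limit; you instead take a plain minimising sequence on $\cN_s$, prove its boundedness by the $v_0=0$ / $v_0\ne0$ dichotomy (the very argument the paper deploys later in Lemma~\ref{lemma-boundedness}), and conclude via the projection $t_s(u_0)u_0$ together with weak lower semicontinuity. Both routes are standard; yours is more self-contained, while the paper simply defers the details to \cite{BM,SzW,SzW-Handbook}.

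There is, however, a genuine gap in your derivation of \eqref{ground-state-level}. You establish $\sup_{t\in[0,1]}\cJ_s(tv)\ge c_s$ only for $v$ with $\cJ_s(v)\le 0$, whereas the infimum in \eqref{ground-state-level} is over \emph{all} $v\ne 0$. For small $v$ (say $\|v\|_s<\rho$) one has $t_s(v)>1$, hence $\sup_{t\in[0,1]}\cJ_s(tv)=\cJ_s(v)$, which can be made arbitrarily close to $0$; so the identity as printed, with $t\in[0,1]$, is in fact false. This is evidently a typo in the paper for $t\ge 0$ (consistent both with the standard Nehari characterisation and with the only use of \eqref{ground-state-level}, in Lemma~\ref{lemma-boundedness}, which needs merely $\cJ_s(u_s)\ge\cJ_s(\tau u_s)$ for all $\tau\ge 0$). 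With that correction your fibering analysis gives the formula immediately: $\sup_{t\ge 0}\cJ_s(tv)=\cJ_s(t_s(v)v)\ge c_s$ for every $v\ne 0$, with equality on $\cN_s$.
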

\begin{proof}
The proof is rather standard, so we will present a sketch  and refer the reader to \cite{BM, SzW, SzW-Handbook} for the details.
  Consider $0<s<1$. It follows from our
  assumptions that the Nehari manifold $\cN_s$ is homeomorphic to the
  unit sphere $S_s$ in $X_0^s(\Omega)$. The homeomorphism $m_s \colon S_s \rightarrow \cN_s$ is given by
  \begin{align*}
  m_s(u) = t_u u,
  \end{align*}
where $t_u > 0$ is the unique positive number such that $t_u u \in \cN_s$. The inverse $m_s^{-1} \colon \cN_s \rightarrow S_s$ is given by $m_s^{-1}(u) = u / \|u\|_s$. Moreover $\cJ_s \circ m_s \colon S_s \rightarrow \R$ is still of class $C^1$. Then there is a Palais-Smale sequence $\{v_n\}_n \subset S_s$   for $\cJ \circ m_s$. Moreover, we can show that the sequence $\{u_n\}_n \subset \cN_s$ given by $u_n := m_s (v_n)$ is a bounded Palais-Smale sequence for $\cJ_s$ such that $\cJ(u_n) \to c_s$, where $c_s := \inf_{\cN_s} \cJ_s > 0$. Since $X_0^s(\Omega)$ is compactly embedded into $L^\nu(\Omega)$ for every $2 \leq \nu < 2_s^*$, see Theorem~\ref{th:2.5}, it is easy to check that $\{u_n\}_n$ converges strongly (up to a subsequence) in $L^\nu(\Omega)$ to a function $u \neq 0$ such that $\cJ_s'(u)=0$. Finally, the properties of $F$ yield
  \begin{align*}
    \cJ_s(u) = \frac{1}{2} \|u\|_s^2 - \int_\Omega F(x,u)\, dx \leq \liminf_{n \to +\infty} \left\{ \frac{1}{2} \|u_n\|_s^2 -
             \int_\Omega F(x,u_n)\, dx \right\}  
    = \liminf_{n \to +\infty}  \cJ_s(u_n) = c_s.
  \end{align*}
  The proof for the case $s=1$ is similar.
\end{proof}

\section{Non-local to local transition}

For any $s \in (1/2,1)$ we define
\begin{align*}
c_s := \inf_{\cN_s} \cJ_s > 0.
\end{align*}
Similarly, we put also
\begin{align*}
c := \inf_{\cN} \cJ > 0.
\end{align*}

For any $v \in X^s_0 (\Omega) \setminus \{0\}$ we let
$t_s(v) > 0$ be the unique positive real number such that $t_s (v) \in \cN_s$. Then we put $m_s(v) := t_s (v) v$ (see the proof of Proposition \ref{prop:existence}).

\begin{Lem}\label{lem:limsup}
	There results
\begin{align*}
\limsup_{s \to 1^-} c_s \leq c.
\end{align*}
\end{Lem}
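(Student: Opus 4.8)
The plan is to exploit the variational characterization of the ground-state level $c$ on the local problem and compare it with the fractional levels $c_s$ using a fixed test function. Fix $\varepsilon>0$ and let $u_0 \in \cN$ be a ground state of \eqref{eq:1.2}, so that $\cJ(u_0)=c$. Since $C_0^\infty(\Omega)$ is dense in $H_0^1(\Omega)$ and $\cJ$ is continuous there, I would choose $\varphi \in C_0^\infty(\Omega)$ with $\varphi \neq 0$ so close to $u_0$ that $\sup_{t \in [0,1]}\cJ(t\varphi) \leq c + \varepsilon$; here I use that $\sup_{t\ge 0}\cJ(tu_0)=\cJ(u_0)=c$ (the Nehari fibering maps $t \mapsto \cJ(tv)$ have a unique positive maximum), together with the continuity of $v \mapsto \sup_{t\in[0,1]}\cJ(tv)$, which follows from (F1)--(F2) and Sobolev embedding.

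The key point is that, because $\varphi$ is a fixed smooth compactly supported function, $\cJ_s(t\varphi) \to \cJ(t\varphi)$ as $s \to 1^-$ \emph{uniformly for $t \in [0,1]$}. Indeed $\cJ_s(t\varphi) = \frac{t^2}{2}\|\varphi\|_s^2 - \int_\Omega F(x,t\varphi)\,dx$ and $\cJ(t\varphi) = \frac{t^2}{2}\|\varphi\|^2 - \int_\Omega F(x,t\varphi)\,dx$, so the difference is exactly $\frac{t^2}{2}\big(\|\varphi\|_s^2 - \|\varphi\|^2\big)$, which tends to $0$ by Corollary~\ref{cor:2.3} and is controlled uniformly in $t\in[0,1]$ since $t^2 \le 1$. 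Hence there exists $s_\varepsilon \in (1/2,1)$ such that for all $s \in (s_\varepsilon,1)$,
\begin{align*}
\sup_{t \in [0,1]} \cJ_s(t\varphi) \leq \sup_{t \in [0,1]} \cJ(t\varphi) + \varepsilon \leq c + 2\varepsilon.
\end{align*}

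Now I invoke the minimax characterization \eqref{ground-state-level} from Proposition~\ref{prop:existence}: for each such $s$,
\begin{align*}
c_s = \inf_{v \in X_0^s(\Omega)\setminus\{0\}} \sup_{t \in [0,1]} \cJ_s(tv) \leq \sup_{t \in [0,1]} \cJ_s(t\varphi) \leq c + 2\varepsilon,
\end{align*}
using that $\varphi \in C_0^\infty(\Omega) \subset X_0^s(\Omega)$ and $\varphi \neq 0$. Taking $\limsup_{s \to 1^-}$ gives $\limsup_{s\to 1^-} c_s \leq c + 2\varepsilon$, and since $\varepsilon>0$ was arbitrary, $\limsup_{s\to 1^-} c_s \leq c$, as claimed.

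The main obstacle, and the only genuinely non-routine point, is justifying that the fibering maximum can be approximated along $[0,1]$ rather than $[0,\infty)$, i.e. that using the test function $\varphi$ close to $u_0$ one really does capture the supremum over $[0,1]$; this is why I first reduce to a smooth $\varphi$ whose fibering map $t \mapsto \cJ_s(t\varphi)$ attains its maximum at some $t_s(\varphi)$, and one must check that these $t_s(\varphi)$ stay bounded as $s \to 1^-$ (so that eventually $t_s(\varphi) \in [0,1]$ after rescaling $\varphi$, or else the supremum over $[0,1]$ already exceeds the relevant value) — this boundedness follows from (F3) together with the uniform Sobolev bound of Lemma~\ref{lemma-constant-independednt} and the uniform convergence $\|\varphi\|_s \to \|\varphi\|$. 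Everything else is a direct consequence of the minimax formula and Corollary~\ref{cor:2.3}.
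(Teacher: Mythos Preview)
Your argument is correct and rests on the same mechanism as the paper's: take the local ground state as a competitor for the fractional level via the fibering map, and use Corollary~\ref{cor:2.3} for the convergence of norms. The execution differs in a few places. The paper works with the ground state $u_0 \in H_0^1(\Omega)$ itself --- no smooth approximation is needed, since Corollary~\ref{cor:2.3} already applies to every $u \in H_0^1(\Omega)$ --- and bounds $c_s \le \cJ_s(m_s(u_0))$ directly via the Nehari projection rather than the minimax formula. It then shows precisely that $t_s \to 1$ (not merely that it is bounded) by passing to the limit in the Nehari identity $\|u_0\|_s^2 = \int_\Omega \tfrac{f(x,t_s u_0)}{t_s} u_0\,dx$ and invoking the uniqueness from (F4); the final step uses the algebraic identity $\cJ_s(v) - \tfrac12 \cJ_s'(v)(v) = \tfrac12 \int_\Omega \big(f(x,v)v - 2F(x,v)\big)\,dx$ on $\cN_s$ to reduce everything to convergence of integrals, avoiding your uniform-in-$t$ argument. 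Your concern in the last paragraph about the range $t \in [0,1]$ in \eqref{ground-state-level} is well founded (taken literally that infimum would be $0$, so the formula should read $t \ge 0$); the paper's own proof sidesteps this issue entirely by never invoking the minimax characterization and using the Nehari projection $m_s(u_0)$ directly.
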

\begin{proof}
  Take $u \in H^1_0 (\Omega) \subset X^s_0 (\Omega)$ as a ground state
  solution of \eqref{eq:1.2}, in particular $u \in \cN$ and
  $\cJ(u) = c$, where $\cJ$ is given by \eqref{eq:2.2}. Consider the function $m_s (u) \in \cN_s$. Obviously
\begin{align*}
c_s \leq \cJ_s (m_s (u)).
\end{align*}
Hence
\begin{multline*}
  \limsup_{s \to 1^-} c_s \leq \limsup_{s \to 1^-} \cJ_s (m_s(u)) =
  \limsup_{s \to 1^-}  \left\{ \cJ_s (m_s(u))  - \frac12 \cJ_s '
    (m_s(u)) \right\} \\
  = \limsup_{s \to 1^-} \left\{ \frac12 \int_\Omega f(x,m_s(u))m_s(u) - 2
  F(x,m_s(u)) \, dx \right\}.
\end{multline*}
Recall that $m_s(u) = t_s u$ for some real numbers $t_s > 0$. Suppose
by contradiction that $t_s \to +\infty$ as $s \to 1^-$. Then, in view
of the Nehari identity
\begin{align*}
\| u \|_{s}^2 = \int_\Omega \frac{f(x,t_s u)}{t_s^2} t_s u \, dx
\geq 2 \int_\Omega \frac{F(x,t_s u)}{t_s^2 u^2} u^2 \, dx \to +\infty,
\end{align*}
but the left-hand side stays bounded (see Corollary \ref{cor:2.3}). Hence $(t_s)_s$ is bounded. Take any
convergent subsequence $(t_{s_n})$ of $(t_s)$, i.e. $t_{s_n} \to t_0$ as $n\to+\infty$. Obviously $t_0 \geq 0$. We will show that $t_0 \neq 0$. Indeed, suppose that $t_0 = 0$, i.e. $t_{s_n} \to 0$. Then, in view of the Nehari identity
\begin{align*}
\| u \|_{s_n}^2 = \int_\Omega \frac{f(x,t_{s_n} u)}{t_{s_n} u} u^2 \, dx.
\end{align*}
By Corollary \ref{cor:2.3}, $\| u \|_{s_n}^2 \to \|u\|^2 > 0$. Hence, in view of (F2),
\begin{align*}
\|u\|^2 + o(1) = \int_\Omega \frac{f(x,t_{s_n} u)}{t_{s_n} u} u^2 \, dx \to 0,
\end{align*}
a contradiction. Hence $t_0 > 0$. Again, by Corollary \ref{cor:2.3},
\begin{align*}
t_{s_n}^2 \| u \|_{s_n}^2 \to t_0^2 \|u \|^2 \quad \mathrm{as} \ n\to +\infty.
\end{align*}
Moreover, in view of Remark \ref{rem:1.1},
\begin{align*}
 |f(x,t_{s_n} u) t_{s_n} u| \leq  \varepsilon t_{s_n}^2 |u|^2 + C_\varepsilon t_{s_n}^p |u|^p \leq C ( |u|^2 + |u|^p)
\end{align*}
for some constant $C > 0$, independent of $n$. In view of the Lebesgue's convergence theorem
\begin{align*}
\int_\Omega f(x,t_{s_n} u) t_{s_n} u \, dx \to \int_\Omega f(x,t_0 u) t_0 u \, dx.
\end{align*}
Thus the limit $t_0$ satisfies
\begin{align*}
t_0^2 \|u\|^2 = \int_\Omega f(x,t_0 u) t_0 u \, dx.
\end{align*}
Taking the Nehari identity into account we see that $t_0 = 1$. Hence
$t_s \to 1$ as $s \to 1^-$. Repeating the same argument we see that
\begin{multline*}
\limsup_{s \to 1^-} \left\{ \frac12 \int_\Omega f(x,m_s(u))m_s(u) - 2
  F(x,m_s(u)) \, dx \right\} = \frac12 \int_\Omega f(x,u)u - 2 F(x,u)
                               \, dx \\
  = \cJ(u) = c
\end{multline*}
and the proof is completed.
\end{proof}

\begin{Lem}\label{lemma-boundedness}
There exists a constant~$M > 0$ such that
\begin{align*}
\|u_s\|_{L^2 (\Omega)} + \|u_s\|_{s} \leq M
\end{align*}
for every $s \in (1/2,1)$.
\end{Lem}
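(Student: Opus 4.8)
\emph{Overall plan.} Since $\inf_\Omega V>0$ gives $\|u_s\|_{L^2(\Omega)}^2\le(\inf_\Omega V)^{-1}\|u_s\|_s^2$, it is enough to bound $\|u_s\|_s$, and the plan is to argue by contradiction: if no uniform bound exists, there is a sequence $s_n\in(1/2,1)$ with $\rho_n:=\|u_{s_n}\|_{s_n}\to+\infty$, and, after passing to a subsequence, $s_n\to s_\ast\in[1/2,1]$. The argument then relies on two preliminary facts --- a bound $c_s\le C_1$ on the ground-state levels that is uniform in $s\in(1/2,1)$, and the relative compactness in $L^2(\Omega)$ of the normalized sequence $w_n:=u_{s_n}/\rho_n$ (the delicate point, see Step~2) --- after which a by-now-standard superquadratic (Ambrosetti--Rabinowitz-free) argument closes the proof.

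\emph{Step 1: $\sup_{s\in(1/2,1)}c_s<+\infty$.} This upgrades Lemma~\ref{lem:limsup}, which only controls $c_s$ near $s=1$, to the whole interval. Fix $\varphi\in C_0^\infty(\Omega)\setminus\{0\}$. Since $|\xi|^{2s}\le 1+|\xi|^2$ for $s\in[0,1]$, Plancherel's theorem gives $\|\varphi\|_s^2\le K$ for all such $s$, where $K:=\int_{\R^N}(1+|\xi|^2)|\widehat\varphi(\xi)|^2\,d\xi+\|V\|_{L^\infty(\Omega)}\|\varphi\|_{L^2(\Omega)}^2$. Writing $m_s(\varphi)=t_s\varphi$, I would first bound $t_s$ uniformly: by (F1) and (F3) there is $C_0\ge 0$ with $F(x,u)\ge -C_0$ for every $x,u$, so from the Nehari identity $t_s^2\|\varphi\|_s^2=\int_\Omega f(x,t_s\varphi)\,t_s\varphi\,dx$, the inequality $f(x,u)u\ge 2F(x,u)$ of Remark~\ref{rem:1.1}, Fatou's lemma and (F3), the assumption $t_{s_n}\to+\infty$ (along a subsequence) would force
\[
K\ \ge\ t_{s_n}^{-2}\!\int_\Omega f(x,t_{s_n}\varphi)\,t_{s_n}\varphi\,dx\ \ge\ 2t_{s_n}^{-2}\!\int_\Omega F(x,t_{s_n}\varphi)\,dx\ \longrightarrow\ +\infty,
\]
a contradiction. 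Hence $t_s\le T$ uniformly in $s$, and therefore $c_s\le\cJ_s(t_s\varphi)=\tfrac12 t_s^2\|\varphi\|_s^2-\int_\Omega F(x,t_s\varphi)\,dx\le \tfrac12 T^2K+C_0|\Omega|=:C_1$ for all $s\in(1/2,1)$.

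\emph{Step 2: compactness of $\{w_n\}$.} By construction $\|w_n\|_{s_n}=1$, so $\|w_n\|_{L^2(\Omega)}^2\le(\inf_\Omega V)^{-1}$ and, by Lemma~\ref{lemma-constant-independednt}, $\|w_n\|_{L^q(\Omega)}\le C(N,q)$ for every $q\in[2,2N/(N-1)]$, uniformly in $n$; moreover $\|(-\Delta)^{s_n/2}w_n\|_{L^2(\R^N)}\le 1$, so the identity relating this norm to the Gagliardo seminorm yields
\[
\iint_{\R^N\times\R^N}\frac{|w_n(x)-w_n(y)|^2}{|x-y|^{N+2s_n}}\,dx\,dy\ \le\ \frac{2}{C(N,s_n)} .
\]
If $s_\ast<1$, the explicit formula for $C(N,s)$ shows that $s\mapsto C(N,s)$ is continuous and strictly positive on a compact subinterval of $(0,1)$ containing all $s_n$ with $n$ large, so the right-hand side stays bounded; since $|x-y|^{-N-1}\le|x-y|^{-N-2s_n}$ on $\{|x-y|\le1\}$ (because $s_n>1/2$) and $\Omega$ is bounded, one deduces that $\{w_n\}$ is bounded in $X_0^{1/2}(\Omega)=H_0^{1/2}(\Omega)$, and Theorem~\ref{th:2.5} yields a subsequence converging strongly in $L^2(\Omega)$. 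If $s_\ast=1$, then, recalling $C(N,s)/(2(1-s))\to 4N/\omega_{N-1}$ as $s\to1^-$ (cf.\ \cite{DNPV}), the renormalized quantity $(1-s_n)\iint_{\R^N\times\R^N}|x-y|^{-N-2s_n}|w_n(x)-w_n(y)|^2\,dx\,dy$ stays bounded; together with the uniform $L^2$ bound, a Bourgain--Br\'ezis--Mironescu-type compactness criterion then gives a subsequence with $w_n\to w$ in $L^2(\Omega)$, the limit lying in $H_0^1(\Omega)$. \textbf{This last case is the main obstacle}: it requires a compactness theorem for the varying spaces $H_0^{s_n}(\Omega)$ as $s_n\to1$, which is of Bourgain--Br\'ezis--Mironescu type and must be quoted from, or adapted from, the literature on the non-local-to-local limit rather than from the tools collected above.

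\emph{Step 3: the superquadratic dichotomy.} Pass to a further subsequence along which $w_n\to w$ in $L^2(\Omega)$ and a.e.\ in $\Omega$. If $w\ne0$, then $|u_{s_n}(x)|=\rho_n|w_n(x)|\to+\infty$ for a.e.\ $x$ in the positive-measure set $\{w\ne0\}$; since $g_n(x):=\rho_n^{-2}F(x,u_{s_n}(x))\ge -C_0\rho_n^{-2}\ge -C_0$ (once $\rho_n\ge1$) and, by (F3), $g_n(x)\to+\infty$ there, Fatou's lemma gives $\liminf_n\int_\Omega g_n\,dx=+\infty$; but $\cJ_{s_n}(u_{s_n})=c_{s_n}$ means $\int_\Omega g_n\,dx=\tfrac12-c_{s_n}\rho_n^{-2}\to\tfrac12$ by Step~1, a contradiction. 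If $w=0$, then $w_n\to0$ in $L^2(\Omega)$ and, interpolating with the uniform $L^{2N/(N-1)}(\Omega)$ bound, also in $L^p(\Omega)$; since $u_{s_n}\in\cN_{s_n}$, the function $t\mapsto\cJ_{s_n}(tw_n)$ attains its maximum over $t>0$ at $t=\rho_n$ (by the construction of $m_{s_n}$ in the proof of Proposition~\ref{prop:existence}), so for every fixed $t>0$
\[
c_{s_n}=\cJ_{s_n}(u_{s_n})\ \ge\ \cJ_{s_n}(tw_n)\ =\ \frac{t^2}{2}-\int_\Omega F(x,tw_n)\,dx ,
\]
and $|F(x,v)|\le\tfrac{\varepsilon}{2}v^2+\tfrac{C_\varepsilon}{p}|v|^p$ (Remark~\ref{rem:1.1}) forces $\int_\Omega F(x,tw_n)\,dx\to0$; letting $n\to\infty$ yields $C_1\ge t^2/2$ for every $t>0$, again a contradiction. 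Hence the assumption $\rho_n\to+\infty$ is untenable, which proves the lemma.
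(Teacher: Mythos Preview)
Your argument is correct and follows essentially the same route as the paper: reduce to bounding $\|u_s\|_s$, normalize, extract an $L^2$-convergent subsequence via Bourgain--Br\'ezis--Mironescu, and run the standard dichotomy $w=0$ versus $w\neq 0$ using the ground-state characterization \eqref{ground-state-level} and (F3). The paper's proof is in fact less careful than yours: it tacitly argues only along $s\to 1^-$ (invoking Lemma~\ref{lem:limsup} for the bound on $c_s$ and \cite[Corollary~7]{BBM} for compactness), which is all that is needed downstream, whereas you cover the full interval by proving your Step~1 and by treating the case $s_n\to s_\ast<1$ separately via a uniform $H_0^{1/2}$ bound.
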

\begin{proof}
  Note that $\|u_s\|_{L^2 (\Omega)} \leq C \|u_s\|_{s}$, for some $C > 0$ independent of $s$. So it is enough to
  show that $\|u_s\|_{s} \leq M$. Suppose by contradiction that
\begin{align*}
\|u_s\|_{s} \to +\infty \quad \mathrm{as} \ s \to 1^-.
\end{align*}
Put $v_s := \frac{u_s}{\|u_s\|_s}$. Then $\|v_s\|_{s} = 1$, so \cite[Corollary 7]{BBM} implies that $v_s \to v_0$ in $L^2(\Omega)$ for some $v_0 \in H^1_0(\Omega)$. From Lemma \ref{lemma-constant-independednt} $\{v_s\}_s$ is bounded in $L^{\frac{2N}{N-1}} (\Omega)$. Take any $\nu \in \left(2, \frac{2N}{N-1} \right)$ and by the interpolation inequality
\begin{align*}
\|v_s - v_0 \|_{L^\nu (\Omega)} \leq \| v_s - v_0 \|_{L^2 (\Omega)}^\vartheta \|v_s - v_0 \|_{L^{\frac{2N}{N-1}} (\Omega)}^{1-\vartheta} \to 0 \quad \mathrm{as} \ s \to 1^-,
\end{align*}
where $\vartheta \in (0,1)$ is chosen so that $\frac{1}{\nu} = \frac{\vartheta}{2} + \frac{1-\vartheta}{\frac{2N}{N-1}}$. Hence $v_s \to v_0$ in $L^\nu (\Omega)$ for all $2 \leq \nu < \frac{2N}{N-1}$. In particular, we can choose a sequence $\{v_{s_n}\}_n$ such that $v_{s_n} (x) \to v_0(x)$ for a.e. $x \in \Omega$. Note that, from Lemma \ref{lem:limsup}, we know that $\{\cJ_{s_n} (u_{s_n})\}_n$ is bounded. We will consider two cases.

\begin{itemize}
\item Suppose that $v_0 = 0$. Fix any $t > 0$. By \eqref{ground-state-level} we obtain
\begin{align*}
\cJ_{s_n} (u_{s_n}) \geq \cJ_{s_n} \left( \frac{t}{\|u_{s_n}\|_{s_n}} u_{s_n} \right) = \cJ_{s_n} (t v_{s_n}) = \frac{t^2}{2} - \int_\Omega F(x,tv_{s_n}) \, dx.
\end{align*}
From Remark \ref{rem:1.1} we see that
\begin{align*}
\int_\Omega F(x,tv_{s_n}) \, dx \leq \varepsilon t^2 \| v_{s_n}\|_{L^2(\Omega)}^2 + C_\varepsilon t^p \|v_{s_n}\|_{L^p (\Omega)}^p \to 0.
\end{align*}
Hence, for any $t > 0$
\begin{align*}
\cJ_{s_n} (u_{s_n}) \geq \frac{t^2}{2} + o(1),
\end{align*}
which is a contradiction with the boundedness of $\{\cJ_{s_n}(u_{s_n})\}_n$. 
\item Suppose that $v_0 \neq 0$, i.e. $| \supp v_0 | > 0$. Note that for a.e. $x \in \supp v_0$ we have
\begin{align*}
|u_{s_n}(x)| = \| u_{s_n} \|_{s_n} | v_{s_n} (x)| \to +\infty.
\end{align*}
Hence, taking into account the boundedness of $\{\cJ_{s_n} (u_{s_n}) \}_n$ and  Fatou's lemma,
\begin{align*}
o(1) = \frac{\cJ_{s_n} (u_{s_n})}{\|u_{s_n}\|_{s_n}^2} = \frac12 - \int_{\Omega} \frac{F(x, u_{s_n})}{u_{s_n}^2} v_{s_n}^2 \, dx \leq \frac12 - \int_{\supp v_0} \frac{F(x, u_{s_n})}{u_{s_n}^2} v_{s_n}^2 \, dx \to -\infty,
\end{align*}
again a contradiction.
\end{itemize}
\end{proof}

\begin{Cor} \label{cor:2.9}
There is $u_0 \in H^1_0 (\Omega)$ and a sequence $\{s_n\}_n$ such that $s_n \to 1^-$ and
\begin{align*}
u_{s_n} \to u_0 \quad \mathrm{in} \ L^\nu (\Omega) \quad \mathrm{as} \ n \to +\infty
\end{align*}
for all~$\nu \in [2, 2N/(N-1))$.
\end{Cor}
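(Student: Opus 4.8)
The plan is to deduce the statement by combining the uniform bound of Lemma~\ref{lemma-boundedness} with the Bourgain--Brezis--Mironescu compactness theorem \cite[Corollary 7]{BBM} and a short interpolation argument, essentially reproducing the first part of the proof of Lemma~\ref{lemma-boundedness}.

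First I would fix an arbitrary sequence $\{s_n\}_n \subset (1/2,1)$ with $s_n \to 1^-$. By Lemma~\ref{lemma-boundedness} we have $\|u_{s_n}\|_{s_n} \leq M$ for every $n$; using the identity relating $\|(-\Delta)^{s/2}u\|_{L^2(\R^N)}^2$ to the Gagliardo seminorm recalled in Section~\ref{sect:variational-setting}, together with the fact that $1/C(N,s)$ is comparable to $(1-s)^{-1}$ as $s \to 1^-$, this produces a bound uniform in $n$ both on $\|u_{s_n}\|_{L^2(\Omega)}$ and on $(1-s_n)\int_{\R^N\times\R^N} \frac{|u_{s_n}(x)-u_{s_n}(y)|^2}{|x-y|^{N+2s_n}}\,dx\,dy$. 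This is exactly the hypothesis of \cite[Corollary 7]{BBM}, which therefore yields, after passing to a subsequence (still denoted $\{s_n\}_n$), a function $u_0 \in H^1_0(\Omega)$ with $u_{s_n} \to u_0$ strongly in $L^2(\Omega)$.

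Next, applying Lemma~\ref{lemma-constant-independednt} with $q=2N/(N-1)$ and using again $\|u_{s_n}\|_{s_n} \leq M$ shows that $\{u_{s_n}\}_n$ is bounded in $L^{2N/(N-1)}(\Omega)$; moreover $u_0 \in H^1_0(\Omega) \hookrightarrow L^{2N/(N-1)}(\Omega)$ since $N \geq 3$. Hence, for any $\nu \in (2, 2N/(N-1))$, choosing $\vartheta \in (0,1)$ with $\frac{1}{\nu} = \frac{\vartheta}{2} + \frac{1-\vartheta}{2N/(N-1)}$, the interpolation inequality gives
\[
\|u_{s_n} - u_0\|_{L^\nu(\Omega)} \leq \|u_{s_n} - u_0\|_{L^2(\Omega)}^{\vartheta}\,\|u_{s_n} - u_0\|_{L^{2N/(N-1)}(\Omega)}^{1-\vartheta} \longrightarrow 0
\]
as $n \to +\infty$, because the first factor tends to $0$ while the second stays bounded. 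Together with the $L^2$ convergence, this gives $u_{s_n} \to u_0$ in $L^\nu(\Omega)$ for every $\nu \in [2, 2N/(N-1))$, as claimed.

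No genuine obstacle is expected here; the only point needing care is the normalization when invoking \cite[Corollary 7]{BBM}, i.e.\ checking that the bound on $\|u_{s_n}\|_{s_n}$ indeed yields the required uniform bound on $(1-s_n)$ times the Gagliardo seminorm --- but this is precisely the computation already carried out, for functions of unit $\|\cdot\|_{s_n}$-norm, in the proof of Lemma~\ref{lemma-boundedness}, so it transfers without change.
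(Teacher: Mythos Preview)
Your proposal is correct and follows essentially the same route as the paper's own proof: invoke the uniform bound from Lemma~\ref{lemma-boundedness}, apply \cite[Corollary~7]{BBM} to extract an $L^2$-convergent subsequence with limit in $H^1_0(\Omega)$, then use Lemma~\ref{lemma-constant-independednt} and interpolation to upgrade to $L^\nu$ convergence for all $\nu \in [2, 2N/(N-1))$. You are simply more explicit than the paper about the normalization needed to feed the bound $\|u_{s_n}\|_{s_n}\leq M$ into the BBM hypothesis, and about why $u_0 \in L^{2N/(N-1)}(\Omega)$.
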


\begin{proof}
From Lemma \ref{lemma-boundedness} and \cite[Corollary 7]{BBM} we note that
\begin{align*}
u_{s_n} \to u_0 \quad \mathrm{in} \ L^2(\Omega)
\end{align*}
for some $u_0 \in H^1_0 (\Omega)$ and sequence $\{s_n\}_n$. In view of Lemma
\ref{lemma-constant-independednt} there is a constant~$C > 0$ (independent of
$s$) such that
\begin{align*}
\|u_{s_n}\|_{ L^{\frac{2N}{N-1}}(\Omega) } \leq C \| u_{s_n} \|_{s_n}.
\end{align*}
In particular, $\{u_{s_n}\}_n$ is bounded in $L^{\frac{2N}{N-1}}
(\Omega)$. Then for any $\nu \in (2, 2N/(N-1))$ we have
\begin{align*}
 \limsup_{n \to +\infty}  \|u_{s_n} - u_0\|_{L^\nu (\Omega)} \leq \limsup_{n \to +\infty} \|u_{s_n} - u_0 \|_{L^2 (\Omega)}^\vartheta \| u_{s_n} - u_0 \|_{L^{\frac{2N}{N-1}} (\Omega)}^{1-\vartheta} =0,
\end{align*}
where $\vartheta \in (0,1)$ is chosen so that 
\begin{align*}
\frac{1}{\nu} = \frac{\vartheta}{2} + \frac{1-\vartheta}{\frac{2N}{N-1}}.
\end{align*}
\end{proof}

\begin{Lem}
The limit $u_0$ is a weak solution for \eqref{eq:1.2}.
\end{Lem}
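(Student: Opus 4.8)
The plan is to pass to the limit in the weak formulation of \eqref{eq:1.1} satisfied by the ground states $u_{s_n}$, tested against smooth functions, and then to conclude by density. Fix $\varphi \in C_0^\infty(\Omega)$. Since $C_0^\infty(\Omega) \subset X_0^{s}(\Omega)$ for every $s \in (1/2,1)$, the function $\varphi$ is an admissible test function for \eqref{eq:1.1}, so along the sequence $\{s_n\}_n$ of Corollary \ref{cor:2.9} we have
\[
\langle (-\Delta)^{s_n/2} u_{s_n} \mid (-\Delta)^{s_n/2}\varphi\rangle_{L^2(\mathbb{R}^N)} + \int_\Omega V(x)\, u_{s_n}\varphi\,dx = \int_\Omega f(x,u_{s_n})\varphi\,dx ,
\]
and one must show that each of the three terms converges to the corresponding term in the weak formulation of \eqref{eq:1.2}.

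For the quadratic form I would first move the fractional operator onto the test function: by Plancherel (self-adjointness of $(-\Delta)^{s_n/2}$) and the fact that $u_{s_n} = 0$ outside $\Omega$,
\[
\langle (-\Delta)^{s_n/2} u_{s_n} \mid (-\Delta)^{s_n/2}\varphi\rangle_{L^2(\mathbb{R}^N)} = \langle u_{s_n} \mid (-\Delta)^{s_n}\varphi\rangle_{L^2(\mathbb{R}^N)} = \int_\Omega u_{s_n}\,(-\Delta)^{s_n}\varphi\,dx .
\]
Writing
\[
\int_\Omega u_{s_n}\,(-\Delta)^{s_n}\varphi\,dx - \int_\Omega u_0\,(-\Delta)\varphi\,dx = \int_\Omega (u_{s_n}-u_0)(-\Delta)^{s_n}\varphi\,dx + \int_\Omega u_0\big((-\Delta)^{s_n}\varphi - (-\Delta)\varphi\big)\,dx ,
\]
I would estimate the first integral by $\|u_{s_n}-u_0\|_{L^2(\Omega)}\,\|(-\Delta)^{s_n}\varphi\|_{L^2(\Omega)}$, which tends to $0$ because $u_{s_n}\to u_0$ in $L^2(\Omega)$ by Corollary \ref{cor:2.9} while $\|(-\Delta)^{s_n}\varphi\|_{L^2(\Omega)}$ stays bounded, both facts following from the convergence lemma $\|(-\Delta)^s\varphi - (-\Delta)\varphi\|_{L^2(\Omega)}\to 0$ recalled above; the second integral is bounded by $\|u_0\|_{L^2(\Omega)}\,\|(-\Delta)^{s_n}\varphi - (-\Delta)\varphi\|_{L^2(\Omega)} \to 0$ by the same lemma. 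Finally $\int_\Omega u_0\,(-\Delta)\varphi\,dx = \int_\Omega \nabla u_0\cdot\nabla\varphi\,dx$ after one integration by parts, legitimate since $u_0 \in H^1_0(\Omega)$ and $\varphi \in C_0^\infty(\Omega)$.

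The potential term is immediate, since $V\varphi \in L^\infty(\Omega)$ and $u_{s_n}\to u_0$ in $L^1(\Omega)$. For the nonlinear term I would use the subcriticality of $p$: as $p\in\big(2,\tfrac{2N}{N-1}\big)$, Corollary \ref{cor:2.9} gives $u_{s_n}\to u_0$ in $L^p(\Omega)$; passing to a further subsequence we may assume $u_{s_n}\to u_0$ a.e. in $\Omega$ and $|u_{s_n}|\le g$ for some $g\in L^p(\Omega)$, so that $|f(x,u_{s_n})\varphi|\le C\big(1+|g|^{p-1}\big)\|\varphi\|_{L^\infty(\Omega)}\in L^1(\Omega)$ by (F1), and dominated convergence yields $\int_\Omega f(x,u_{s_n})\varphi\,dx \to \int_\Omega f(x,u_0)\varphi\,dx$; since the limit is independent of the subsequence, the whole sequence converges. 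Collecting the three limits shows that $u_0$ satisfies
\[
\int_\Omega \nabla u_0\cdot\nabla\varphi\,dx + \int_\Omega V(x)\,u_0\varphi\,dx = \int_\Omega f(x,u_0)\varphi\,dx
\]
for every $\varphi\in C_0^\infty(\Omega)$. Since $C_0^\infty(\Omega)$ is dense in $H^1_0(\Omega)$ and the three functionals $\varphi\mapsto\int_\Omega\nabla u_0\cdot\nabla\varphi$, $\varphi\mapsto\int_\Omega V u_0\varphi$ and $\varphi\mapsto\int_\Omega f(x,u_0)\varphi$ are continuous on $H^1_0(\Omega)$ (the last because $f(\cdot,u_0)\in L^{p/(p-1)}(\Omega)$ by (F1) and $H^1_0(\Omega)\hookrightarrow L^p(\Omega)$), the identity extends to all $\varphi\in H^1_0(\Omega)$, which is the assertion. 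I expect the bilinear term to be the only delicate point: $u_{s_n}$ is controlled only in Lebesgue norms and not in any fractional Gagliardo seminorm, so the fractional operator must be transferred onto the smooth test function, and the convergence $(-\Delta)^{s_n}\varphi\to(-\Delta)\varphi$ in $L^2(\Omega)$ is exactly what makes this effective; the remaining terms follow from standard compactness and subcriticality arguments.
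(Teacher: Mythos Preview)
Your proof is correct and follows essentially the same route as the paper: transfer the fractional operator onto the smooth test function, use the $L^2$-convergence $(-\Delta)^{s_n}\varphi\to(-\Delta)\varphi$ together with $u_{s_n}\to u_0$ in $L^2(\Omega)$ for the bilinear term, and handle the potential and nonlinear terms by the $L^\nu$-convergence from Corollary~\ref{cor:2.9}. The only cosmetic differences are that the paper splits the bilinear difference the other way around (grouping $u_{s_n}$ with $(-\Delta)^{s_n}\varphi-(-\Delta)\varphi$ and $u_{s_n}-u_0$ with $(-\Delta)\varphi$) and invokes Vitali's theorem instead of dominated convergence for the nonlinear term; your added density step to pass from $C_0^\infty(\Omega)$ to $H_0^1(\Omega)$ is a nice explicit touch that the paper leaves implicit.
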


\begin{proof}
Take any test function $\varphi \in C_0^\infty (\R^N)$ and note that by \cite[Section 6]{W}
we have
\begin{align*}
\int_{\R^N} (-\Delta)^{s/2} u_{s_n} (-\Delta)^{s_n /2} \varphi \, dx = \int_{\R^N} u_{s_n} (-\Delta)^{s_n} \varphi \, dx.
\end{align*}
Moreover
\begin{multline*}
\left| \int_{\R^N} u_{s_n} (-\Delta)^{s_n} \varphi \, dx - \int_{\R^N} u_0 (-\Delta \varphi) \, dx \right| = \left| \int_{\Omega} u_{s_n} (-\Delta)^{s_n} \varphi \, dx - \int_{\Omega} u_0 (-\Delta \varphi) \, dx \right| \\
= \left| \int_{\Omega} u_{s_n} \left( (-\Delta)^{s_n} \varphi - (-\Delta \varphi) \right) \, dx + \int_{\Omega} (u_{s_n} - u_0) (-\Delta \varphi) \, dx \right| \\
\leq \left\|u_{s_n} \right\|_{L^2(\Omega)} \left\| (-\Delta)^{s_n} \varphi - (-\Delta \varphi) \right\|_{L^2(\Omega)} + \|  (-\Delta \varphi) \|_{L^2(\Omega)} \|u_{s_n} - u_0\|_{L^2(\Omega)} \to 0.
\end{multline*}
Hence
\begin{align*}
\lim_{n \to +\infty} \int_{\R^N} (-\Delta)^{s_n /2} u_{s_n} (-\Delta)^{s_n /2} \varphi \, dx = \int_{\R^N} u_0 (-\Delta \varphi) \, dx = \int_{\Omega} \nabla u_0 \cdot \nabla \varphi \, dx.
\end{align*}
Obviously
\begin{align*}
\lim_{n \to +\infty} \int_\Omega V(x) u_{s_n} \varphi \, dx = \int_\Omega V(x) u_0 \varphi \, dx.
\end{align*}
Take any measurable set $E \subset \Omega$ and note that, taking into account Remark \ref{rem:1.1},
\begin{align*}
\int_E | f(x,u_{s_n}) \varphi | \, dx \leq \varepsilon \| u_{s_n} \|_{L^2 (\Omega)} \| \varphi \chi_E \|_{L^2 (\Omega)} + C_\varepsilon \| u_{s_n} \|_{L^p (\Omega)}^{p-1} \| \varphi \chi_E \|_{L^p (\R^N)}.
\end{align*}
Hence the family $\{ f(\cdot, u_{s_n}) \varphi \}_n$ is uniformly integrable on $\Omega$ and in view of the Vitali convergence theorem
\begin{align*}
\lim_{n \to +\infty} \int_\Omega f(x,u_{s_n})\varphi \, dx = \int_\Omega f(x,u_0)\varphi \, dx.
\end{align*}
Therefore $u_0$ satisfies
\begin{align*}
\int_{\Omega} \nabla u_0 \cdot \nabla \varphi \, dx + \int_\Omega V(x) u_0 \varphi \, dx = \int_\Omega f(x,u_0)\varphi \, dx,
\end{align*}
i.e. $u_0$ is a weak solution to \eqref{eq:1.2}.
\end{proof}
\begin{Lem}
Since $u_s \in \cN_s$ there is (independent of $s$) constant $\rho$ such that
\begin{align*}
\|u_s\|_{s} \geq \rho > 0.
\end{align*}
\end{Lem}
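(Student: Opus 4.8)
The plan is to combine the Nehari identity defining $\cN_s$ with the subcritical bound for $f$ recorded in Remark~\ref{rem:1.1} and the $s$-uniform Sobolev embedding of Lemma~\ref{lemma-constant-independednt}. First I would write out the membership $u_s \in \cN_s$, i.e.\ $\cJ_s'(u_s)(u_s)=0$, which, recalling \eqref{norm-s}, amounts to the identity $\|u_s\|_s^2 = \int_\Omega f(x,u_s)u_s\,dx$.

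Next, fixing an arbitrary $\varepsilon>0$, I would estimate the right-hand side using $|f(x,u_s)u_s| \leq \varepsilon u_s^2 + C_\varepsilon |u_s|^p$ (valid pointwise a.e.\ by Remark~\ref{rem:1.1}), so that $\|u_s\|_s^2 \leq \varepsilon \|u_s\|_{L^2(\Omega)}^2 + C_\varepsilon \|u_s\|_{L^p(\Omega)}^p$. Two $s$-independent bounds then enter: from the definition of $\|\cdot\|_s$ and assumption~(V) one has $\|u_s\|_{L^2(\Omega)}^2 \leq (\inf_\Omega V)^{-1}\|u_s\|_s^2$; and, since $p\in(2,2N/(N-1))$, Lemma~\ref{lemma-constant-independednt} with $q=p$ yields $\|u_s\|_{L^p(\Omega)} \leq C(N,p)\,\|(-\Delta)^{s/2}u_s\|_{L^2(\R^N)} \leq C(N,p)\,\|u_s\|_s$, with $C(N,p)$ not depending on $s\in(1/2,1)$. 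Substituting gives $\|u_s\|_s^2 \leq \varepsilon(\inf_\Omega V)^{-1}\|u_s\|_s^2 + C_\varepsilon C(N,p)^p\|u_s\|_s^p$.

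Finally I would choose $\varepsilon := \tfrac12\inf_\Omega V$, which absorbs the first term into the left-hand side and leaves $\tfrac12\|u_s\|_s^2 \leq C_\varepsilon C(N,p)^p\|u_s\|_s^p$. Since $u_s\neq 0$ we have $\|u_s\|_s>0$, so we may divide by $\|u_s\|_s^2$; using $p>2$ this rearranges to $\|u_s\|_s \geq \bigl(2C_\varepsilon C(N,p)^p\bigr)^{-1/(p-2)} =: \rho > 0$.

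There is no genuine obstacle here beyond bookkeeping; the one point that must be watched — and indeed the reason Lemma~\ref{lemma-constant-independednt} was established — is that every constant involved ($\inf_\Omega V$, the embedding constant $C(N,p)$, and $C_\varepsilon$, which depends only on $\varepsilon$ and the growth exponents of $f$) is independent of $s$, so the resulting $\rho$ is uniform for $s\in(1/2,1)$.
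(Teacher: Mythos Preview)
Your proposal is correct and follows essentially the same route as the paper: use the Nehari identity $\|u_s\|_s^2=\int_\Omega f(x,u_s)u_s\,dx$, estimate the right-hand side via Remark~\ref{rem:1.1}, control the $L^2$ and $L^p$ norms by $\|\cdot\|_s$ with $s$-independent constants (the paper just writes a generic $C$ where you spell out $(\inf_\Omega V)^{-1}$ and $C(N,p)$ from Lemma~\ref{lemma-constant-independednt}), and absorb the $\varepsilon$-term to obtain a uniform lower bound on $\|u_s\|_s^{p-2}$. The only difference is the level of explicitness in tracking the constants.
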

\begin{proof}
  Since $u_s \in \mathcal{N}_s$, we can write by Remark \ref{rem:1.1}
  \begin{align*}
    \|u_s\|_{s}^2 = \int_{\Omega} f(x,u_s)u_s \, dx &\leq \varepsilon \|u_s\|_{L^2(\Omega)}^2 + C_\varepsilon \|u_s\|_{L^p(\Omega)}^p \\
                                                    &\leq C \left( \varepsilon \|u_s\|_{s}^2 + C_\varepsilon \|u_s\|_{s}^p \right)
  \end{align*}
  for a constant $C>0$ independent of $s$. Choosing $\varepsilon>0$
  small enough, we conclude that
  \begin{align*}
    \|u_s\|_{s}^{p-2} \geq \frac{1-C \varepsilon}{C \cdot C_\varepsilon} =: \rho>0.
	\end{align*}
\end{proof}

\begin{Lem}
We have~$u_0 \neq 0$ and therefore $u_0 \in \cN$.
\end{Lem}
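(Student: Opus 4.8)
The plan is to combine the Nehari identity for the $u_{s_n}$ with the uniform coercivity bound $\|u_{s_n}\|_{s_n}\geq\rho>0$ established in the previous lemma. First I would write, using $u_{s_n}\in\cN_{s_n}$,
\begin{align*}
\|u_{s_n}\|_{s_n}^2 = \int_\Omega f(x,u_{s_n})u_{s_n}\,dx ,
\end{align*}
and then pass to the limit on the right-hand side. By Corollary~\ref{cor:2.9}, $u_{s_n}\to u_0$ in $L^\nu(\Omega)$ for every $\nu\in[2,2N/(N-1))$, and since $p<2N/(N-1)$ by (F1) this covers $\nu=2$ and $\nu=p$. Extracting a further subsequence with $u_{s_n}\to u_0$ a.e.\ in $\Omega$ and $|u_{s_n}|\leq h$ for some $h\in L^p(\Omega)\subset L^2(\Omega)$ (recall $\Omega$ is bounded), the growth bound of Remark~\ref{rem:1.1} gives the integrable majorant $|f(x,u_{s_n})u_{s_n}|\leq\varepsilon h^2+C_\varepsilon h^p$, so the dominated convergence theorem (with $f(x,\cdot)$ continuous) yields $\int_\Omega f(x,u_{s_n})u_{s_n}\,dx\to\int_\Omega f(x,u_0)u_0\,dx$. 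Alternatively, one may invoke the Vitali convergence theorem exactly as in the proof that $u_0$ solves \eqref{eq:1.2}.

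Next I would conclude: since $\|u_{s_n}\|_{s_n}^2\geq\rho^2$ for all $n$, passing to the limit gives $\int_\Omega f(x,u_0)u_0\,dx\geq\rho^2>0$. If $u_0$ vanished identically this integral would equal $0$, because $f(x,0)=0$ by (F2); this contradiction forces $u_0\neq0$. Finally, since $u_0\in H^1_0(\Omega)$ is a weak solution of \eqref{eq:1.2} by the previous lemma, testing the weak formulation against $\varphi=u_0$ gives $\cJ'(u_0)(u_0)=0$, and together with $u_0\neq0$ this is precisely the statement $u_0\in\cN$.

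The only delicate point is the convergence of the nonlinear term $\int_\Omega f(x,u_{s_n})u_{s_n}\,dx$; this is where the $s$-uniform Sobolev embedding (Lemma~\ref{lemma-constant-independednt}) --- guaranteeing that $\{u_{s_n}\}_n$ is bounded in $L^p(\Omega)$, so that Corollary~\ref{cor:2.9} applies --- and the growth control of Remark~\ref{rem:1.1} enter. Everything else is an immediate consequence of the Nehari identity and the already proven bound $\|u_s\|_s\geq\rho$.
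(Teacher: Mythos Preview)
Your argument is correct and follows essentially the same route as the paper: both use the Nehari identity $\|u_{s_n}\|_{s_n}^2=\int_\Omega f(x,u_{s_n})u_{s_n}\,dx$ together with the uniform lower bound $\|u_{s_n}\|_{s_n}\geq\rho$ from the preceding lemma, and conclude by showing the right-hand side would tend to $0$ if $u_0=0$. The paper's version is terser---it assumes $u_0=0$ directly and uses only $u_{s_n}\to 0$ in $L^2$ and $L^p$ to force the integral to zero---whereas you prove convergence of the nonlinear term to $\int_\Omega f(x,u_0)u_0\,dx$ in general; but this is a cosmetic difference, not a different strategy.
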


\begin{proof}
If $u_0 = 0$, then $u_{s_n} \to 0$ in $L^2(\Omega)$ and in $L^p(\Omega)$. Then
\begin{align*}
\|u_{s_n}\|_{s_n} = \int_\Omega f(x,u_{s_n}) u_{s_n} \, dx \to 0,
\end{align*}
a contradiction.
\end{proof}

\begin{Lem}\label{lem:liminf}
  There results
\begin{align*}
\liminf_{n \to +\infty} c_{s_n} \geq c.
\end{align*}
\end{Lem}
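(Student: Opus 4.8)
The plan is to use the variational characterization of $u_0$ together with the minimax formula \eqref{ground-state-level} for $c_{s_n}$. Since we already know that $u_0 \in \cN$ (previous lemma) and $u_0$ is a weak solution of \eqref{eq:1.2}, we have $\cJ(u_0) \geq c$. The strategy is therefore to show that
\begin{align*}
\liminf_{n \to +\infty} c_{s_n} \geq \cJ(u_0),
\end{align*}
which combined with $\cJ(u_0) \geq c$ gives the claim. To this end I would first observe that, by \eqref{ground-state-level}, for each $n$ one has $c_{s_n} = \cJ_{s_n}(u_{s_n}) = \sup_{t \in [0,1]}\cJ_{s_n}(t u_{s_n})$, but more useful is the simple lower bound obtained by evaluating the Nehari identity: since $u_{s_n} \in \cN_{s_n}$, Remark \ref{rem:1.1} gives
\begin{align*}
c_{s_n} = \cJ_{s_n}(u_{s_n}) = \cJ_{s_n}(u_{s_n}) - \tfrac12 \cJ_{s_n}'(u_{s_n})(u_{s_n}) = \int_\Omega \left( \tfrac12 f(x,u_{s_n})u_{s_n} - F(x,u_{s_n}) \right) dx \geq 0,
\end{align*}
and the integrand is nonnegative a.e.\ by (F4).

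The key step is then a lower semicontinuity argument. By Corollary \ref{cor:2.9}, passing to a further subsequence, $u_{s_n}(x) \to u_0(x)$ for a.e.\ $x \in \Omega$, hence by continuity of $f$ and $F$ in the second variable, $\tfrac12 f(x,u_{s_n})u_{s_n} - F(x,u_{s_n}) \to \tfrac12 f(x,u_0)u_0 - F(x,u_0)$ a.e.\ in $\Omega$. Since this integrand is nonnegative, Fatou's lemma yields
\begin{align*}
\liminf_{n \to +\infty} \int_\Omega \left( \tfrac12 f(x,u_{s_n})u_{s_n} - F(x,u_{s_n}) \right) dx \geq \int_\Omega \left( \tfrac12 f(x,u_0)u_0 - F(x,u_0) \right) dx.
\end{align*}
Because $u_0 \in \cN$, the right-hand side equals $\cJ(u_0) - \tfrac12 \cJ'(u_0)(u_0) = \cJ(u_0) \geq c$. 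Combining with the identity for $c_{s_n}$ above gives $\liminf_{n} c_{s_n} \geq c$, as desired.

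The main obstacle I anticipate is justifying that the a.e.\ convergence together with Fatou is actually enough, i.e.\ that one does not lose mass in the $L^2$-limit passage on the left-hand side of the Nehari identity used to identify $c_{s_n}$. This is handled by the sign condition $f(x,u)u \geq 2F(x,u) \geq 0$ from Remark \ref{rem:1.1} (which makes the integrand a nonnegative function, so Fatou applies with no uniform-integrability hypothesis), so no compactness beyond the a.e.\ convergence from Corollary \ref{cor:2.9} is required. A minor point to check is that the subsequence extraction (from $L^\nu$-convergence to a.e.\ convergence) is compatible with the subsequence $\{s_n\}$ already fixed in Corollary \ref{cor:2.9}; this is immediate since a.e.\ convergence along a subsequence can always be arranged without relabelling the relevant quantities, and $c_{s_n} \to c$ will in fact follow once Lemma \ref{lem:limsup} is also invoked. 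Together with Lemma \ref{lem:limsup}, this lemma shows $c_{s_n} \to c$ and, by the displayed lower-semicontinuity inequality being an equality, that $\cJ(u_0) = c$, so $u_0$ is a ground state of \eqref{eq:1.2}, completing the proof of Theorem \ref{th:main}.
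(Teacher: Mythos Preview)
Your proof is correct and follows the same skeleton as the paper's: rewrite $c_{s_n}=\cJ_{s_n}(u_{s_n})-\tfrac12\cJ_{s_n}'(u_{s_n})(u_{s_n})=\int_\Omega\bigl(\tfrac12 f(x,u_{s_n})u_{s_n}-F(x,u_{s_n})\bigr)\,dx$, pass to the limit, and identify the result as $\cJ(u_0)\geq c$ using $u_0\in\cN$.

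The one technical difference is in how the limit is taken. The paper exploits the full strong $L^\nu$ convergence of Corollary~\ref{cor:2.9} (for $\nu=2$ and $\nu=p$) together with the growth bound of Remark~\ref{rem:1.1} to obtain actual convergence of the integral, so that $\liminf_n c_{s_n}=\cJ(u_0)$; this equality is then reused verbatim in Lemma~\ref{lem:ground-state}. You instead pass to an a.e.\ convergent subsequence and apply Fatou's lemma, using the sign condition $\tfrac12 f(x,u)u-F(x,u)\geq 0$ from (F4). This yields only $\liminf_n c_{s_n}\geq\cJ(u_0)$, and you recover the equality afterwards via the sandwich with Lemma~\ref{lem:limsup}. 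Both routes are valid; yours is marginally more elementary (no uniform-integrability check), while the paper's gives the stronger intermediate statement directly. One small point: to be fully rigorous with Fatou you should first pass to a subsequence realizing the $\liminf$ and only then extract the a.e.\ convergent sub-subsequence, so that the limit along it coincides with the original $\liminf$.
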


\begin{proof}
Since $u_{s_n} \in \mathcal{N}_{s_n}$, then by Corollary \ref{cor:2.9} we have
\begin{align*}
\liminf_{n \to +\infty} c_{s_n} &= \liminf_{n \to +\infty} \cJ_{s_n} (u_{s_n}) = \liminf_{n \to +\infty} \left\{ \cJ_{s_n} (u_{s_n}) - \frac12 \cJ_{s_n} ' (u_{s_n})(u_{s_n}) \right\} \\
&= \liminf_{n \to +\infty} \left\{ \frac{1}{2} \int_\Omega f(x,u_{s_n})u_{s_n} -
  2 F(x,u_{s_n}) \, dx  \right\} \\
&=  \frac12 \int_\Omega f(x,u_0)u_0 - 2 F(x,u_0) \, dx = \cJ(u_0) \geq c.
\end{align*}
\end{proof}
\begin{Lem}\label{lem:ground-state}
The function $u_0 \in H^1_0(\Omega)$ is a ground state solution to \eqref{eq:1.2}.
\end{Lem}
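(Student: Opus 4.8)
The plan is to combine the two one-sided estimates on the ground state levels with the fact, already established, that $u_0 \in \cN$. From Lemma \ref{lem:limsup} we have $\limsup_{s \to 1^-} c_s \le c$; in particular $\limsup_{n \to +\infty} c_{s_n} \le c$. On the other hand, Lemma \ref{lem:liminf} gives $\liminf_{n \to +\infty} c_{s_n} \ge c$. Combining these, $\lim_{n \to +\infty} c_{s_n} = c$, and moreover the chain of (in)equalities in the proof of Lemma \ref{lem:liminf} becomes a chain of equalities: in particular
\begin{align*}
\cJ(u_0) = \frac12 \int_\Omega f(x,u_0)u_0 - 2F(x,u_0)\, dx = \liminf_{n \to +\infty} c_{s_n} = c.
\end{align*}
Thus $u_0 \in \cN$ with $\cJ(u_0) = c = \inf_{\cN} \cJ$, which is exactly the statement that $u_0$ is a ground state solution of \eqref{eq:1.2}.

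The only genuine point to check is that $u_0$ really is an admissible competitor, i.e. that $u_0 \in \cN$ and that $c = \inf_{\cN}\cJ$ is indeed attained at a value $\le \cJ(u_0)$; but $u_0 \in \cN$ was proved in the preceding lemma (using $u_0 \ne 0$ together with the fact, from the lemma showing $u_0$ is a weak solution, that $\cJ'(u_0) = 0$, hence $\cJ'(u_0)(u_0) = 0$), and $c \le \cJ(u_0)$ holds trivially by definition of $c$ as an infimum over $\cN$. So the real content is the reverse inequality $\cJ(u_0) \le c$, and this is delivered by the $\limsup$ bound of Lemma \ref{lem:limsup} applied along the sequence $s_n$.

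I would write the argument in essentially two lines: first invoke Lemma \ref{lem:limsup} and Lemma \ref{lem:liminf} to get $c \le \liminf_n c_{s_n} \le \limsup_n c_{s_n} \le c$, so that $c_{s_n} \to c$; then note that the computation in Lemma \ref{lem:liminf} shows $\cJ(u_0) = \lim_n c_{s_n} = c$, and since $u_0 \in \cN$ this means $\cJ(u_0) = \inf_{\cN}\cJ$, i.e. $u_0$ is a ground state of \eqref{eq:1.2}. There is no real obstacle here: the work has all been done in the earlier lemmas (boundedness, compactness in $L^\nu$, the $\limsup$/$\liminf$ level estimates, $u_0 \ne 0$, and $\cJ'(u_0)=0$), and this final lemma is just the bookkeeping that assembles them.
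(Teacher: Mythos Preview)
Your proposal is correct and follows essentially the same approach as the paper: combine Lemma~\ref{lem:limsup} and Lemma~\ref{lem:liminf} to conclude $c_{s_n} \to c$, then read off from the computation in the proof of Lemma~\ref{lem:liminf} that $\cJ(u_0) = c$, and since $u_0 \in \cN$ this yields the ground state property. The paper's proof is written more tersely but the logic is identical.
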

\begin{proof}
Note that, from Lemma \ref{lem:limsup} and \ref{lem:liminf} we have
\begin{align*}
\liminf_{n \to + \infty} c_{s_n} \geq c \geq \limsup_{s \to 1^-} c_s \geq \limsup_{n \to + \infty} c_{s_n}.
\end{align*}
Hence $\lim_{n \to +\infty} c_{s_n}$ exists and $\lim_{n \to +\infty} c_{s_n} = c$. From the proof of Lemma \ref{lem:liminf} we have
\begin{align*}
\lim_{n \to +\infty} c_{s_n} = \cJ(u_0).
\end{align*}
Thus $\cJ(u_0) = c$.
\end{proof}

\begin{proof}[Proof of Theorem \ref{th:main}]
The statement is a direct consequence of Corollary \ref{cor:2.9} and Lemma \ref{lem:ground-state}.
\end{proof}

\section*{Acknowledgements}

Bartosz Bieganowski was partially supported by the National Science Centre, Poland (Grant No. 2017/25/N/ST1/00531). Simone Secchi is member of the \emph{Gruppo Nazionale per l'Analisi Ma\-te\-ma\-ti\-ca, la Probabilit\`a e le loro Applicazioni} (GNAMPA) of the {\em Istituto Nazionale di Alta Matematica} (INdAM).


\begin{thebibliography}{99}

\bibitem{B} O. G. Bakunin, \textit{Turbulence and Diffusion: Scaling Versus Equations}, Springer, Berlin, 2008.
	
\bibitem{BHS} U. Biccari, V. Hern\'andez-Santamar\'{\i}a, \emph{The Poisson equation from non-local to local}, Electronic Journal of Differential Equations Vol. \textbf{2018} (2018), no. 145, 1--13.

\bibitem{BM} B. Bieganowski, J. Mederski, \textit{Nonlinear Schr\"{o}dinger equations with sum of periodic and vanishing potentials and sign-changing nonlinearities}, Commun. Pure Appl. Anal., Vol. \textbf{17}, Issue 1 (2018),  143--161.

\bibitem{BC} J.P. Borthagaray, P. Ciarlet Jr., \emph{On the convergence in the \(H^1\)-norm for the fractional laplacian}, arXiv:1810.07645.

\bibitem{BBM} J. Bourgain, H. Brezis, P. Mironescu, \textit{Another
    look at Sobolev spaces}, Optimal Control and Partial Differential
  Equations, 439--455, IOS, Amsterdam, 2001.

\bibitem{KT} A. Cotsiolis, N. Tavoularis, \emph{Best constants for
      Sobolev inequalities for higher order fractional derivatives},
    J. Math. Anal. Appl. \textbf{295} (2004), 225--236.

\bibitem{DNPV} E. Di Nezza, G. Palatucci, E. Valdinoci,
    \emph{Hitchhiker's guide to the fractional {S}obolev spaces},
    Bull. Sci. Math. \textbf{136} (2012), 521--573.
    
\bibitem{DPV}    S. Dipierro, G. Palatucci, E. Valdinoci, \textit{Dislocation dynamics in crystals: A macroscopic theory in a fractional Laplace
setting}, Comm. Math. Phys. \textbf{333} (2015), no. 2, 1061--1105.

\bibitem{G} G. Gilboa, S. Osher, \textit{Nonlocal operators with applications to image processing}, Multiscale Model. Simul. \textbf{7} (2008),
no. 3, 1005--1028.

\bibitem{MBRS} G. Molica Bisci, V. Radulescu, R. Servadei, \textit{Variational methods for nonlocal fractional problems}. Encyclopedia of Mathematics and Its Applications 162, Cambridge University Press, 2016.

\bibitem{SzW} A. Szulkin, T. Weth, \textit{Ground state solutions for some indefinite variational problems}, J. Funct. Anal. \textbf{257} (2009), no. 12, 3802--3822.

\bibitem{SzW-Handbook} A. Szulkin, T. Weth, \textit{The method of Nehari manifold}, in: David Yang Gao, Dumitru Motreanu [ed.], Handbook of Nonconvex Analysis and Applications, Boston: International Press, 2010, 597--632.

\bibitem{V} J.L. V\'azquez, \textit{Nonlinear diffusion with fractional Laplacian operators}, in: Nonlinear Partial Differential Equations (Oslo
2010), Abel Symp. 7, Springer, Heidelberg (2012), 271--298.

\bibitem{W} M. Warma, \textit{The fractional relative capacity and the
  fractional Laplacian with Neumann and Robin boundary conditions on
  open sets}, Potential Anal. \textbf{42} (2015), no. 2, 499--547.


\end{thebibliography}
\end{document}